\numberwithin{equation}{section}
\newtheorem{theorem}{Theorem}[section]
\newtheorem{lemma}[theorem]{Lemma}
\newtheorem{proposition}[theorem]{Proposition}
\newtheorem{example}[theorem]{Example}
\newtheorem{examples}[theorem]{Examples}
\newtheorem{definition}{Definition}[section]
\newtheorem{corollary}[theorem]{Corollary}
\newtheorem{remark}[theorem]{Remark}
\newcommand{\cl}[1]{\mathcal{#1}} 
\newcommand{\bb}[1]{\mathbb{#1}}
\newcommand{\Alg}[1]{\mathrm{Alg}(#1)} 
\begin{document}

 \title{ Morita embeddings for dual operator algebras and dual operator spaces}

\author[G. K. Eleftherakis ]{G. K. Eleftherakis }

\address{G. K. Eleftherakis\\ University of Patras\\Faculty of Sciences\\ Department of Mathematics\\265 00 Patras Greece }
\email{gelefth@math.upatras.gr}

\thanks{2010 {\it Mathematics Subject Classification.} 47L05 (primary), 47L25, 47L35, 46L10,
16D90 (secondary)} 

\keywords{Dual operator algebras, Dual operator spaces, TRO, Stable isomorphism, Morita equivalence}

\maketitle

 \begin{abstract} We define a relation $\subset _\Delta $ for dual operator algebras. We say that $B\subset _\Delta A$ if
 there exists a projection $p\in A$ such that $B$ and $pAp$ are Morita equivalent in our sense. We show that $\subset _\Delta $ is
 transitive, and we investigate the following question: If $A\subset _\Delta B$ and $B\subset _\Delta A$, then is it true that $A$
 and $B$ are stably isomorphic? We propose an analogous relation $\subset _\Delta $ for dual operator spaces, and we present some
 properties of $\subset _\Delta $ in this case. \end{abstract}

\section{Introduction} 

An operator space $X$ is said to be a dual 
operator space if $X$ is completely isometrically isomorphic to the operator space dual $Y^*$ of an operator space $Y.$ If, in addition, 
$X$ is an operator algebra, then we call it a dual operator algebra. For example, Von Neumann algebras and 
nest algebras are dual operator algebras.
 Blecher, Muhly and Paulsen introduced the notion of the Morita equivalence of non-self-adjoint operator algebras \cite{bmp}. 
Subsequently, Blecher and Kashyap developed a parallel theory for dual operator algebras \cite{bk}, \cite{kashyap}. At the same time, 
the author of the present article proposed a different notion of Morita equivalence for dual operator algebras, called 
$\Delta $-equivalence. Two unital dual operator algebras $A$ and $B$ are $\Delta $-equivalent if there exist faithful 
normal representations $\alpha: A\rightarrow \alpha (A), \;\;\;\beta: B\rightarrow \beta (B)$ and a ternary ring of 
operators $M$ (i.e., a space satisfying $MM^*M\subseteq M$) such that $\alpha (A)=[M^*\beta (B)M]^{-w^*}$ and 
$\beta (B)=[M\alpha (A)M^*]^{-w^*}$ \cite{ele2}. In this case, we write $A\sim _\Delta B.$ An important property 
is that two algebras are $\Delta $-equivalent if and only if they are stably isomorphic, as was proved by Paulsen
 and the present author in \cite{ele4}. Subsequently, Paulsen, Todorov and the present author defined a Morita-type 
equivalence $\sim _\Delta $ for dual operator spaces \cite{ele5}. This equivalence also has the property of being 
equivalent with the notion of a stable isomorphism. 

In this paper, we define a weaker relation between dual operator 
algebras. We say that the dual operator algebra $B$ $\Delta $-embeds into the dual operator algebra $A$ if there exists 
a projection $p\in A$ such that $B\sim _\Delta pAp.$ In this case, we write $B\subset _\Delta A.$ We investigate the relation 
$\subset _\Delta $ between unital dual operator algebras, and we prove that it is a transitive relation. In the case of 
von Neumann algebras, it is a partial order relation. This means that it has the additional property that if $A, B$ are 
von Neumann algebras and $A\subset _\Delta B, B\subset _\Delta A,$ then $A\sim _\Delta B.$ We present a counterexample to 
demonstrate that this does not always hold in the case of non-self-adjoint algebras. In Section 2, we also present a 
characterisation of the relation $\subset _\Delta $ in the terms of reflexive lattices. 

In Section 3, we present an 
analogous theory defining the relation $\subset _\Delta $ for dual operator spaces. In this case, if $X, Y$ are dual 
operator spaces such that $Y\subset _\Delta X,$ then there exist projections $p$ and $q$ such that $pX\subseteq X,
 Xq\subseteq X$ and $Y\sim _\Delta pXq.$ We also define a weaker relation $\subset _{cb\Delta }.$ We say that 
$Y\subset _{cb\Delta }X$ if there exist $w^*$-continuous completely bounded isomorphisms $\phi: X\rightarrow \phi (X), 
\psi: Y\rightarrow \psi (Y) $ such that $\phi (X)\subset _\Delta \psi (Y).$ We present a theorem describing 
$\subset _{cb\Delta }$ in the terms of stable isomorphisms ( Theorem \ref{20}), and we investigate the problem 
of whether $\subset _\Delta $ is a transitive relation for dual operator spaces ( Theorem \ref{21}). 

In the following,
 we briefly describe the notions used in this paper. We refer the reader to the books \cite{bm}, \cite{dav}, \cite{er}, 
\cite{paul} and \cite{p} for further details. If $V$ is a linear space and $S\subseteq V,$ then by $[S]$ we denote the 
linear span of $S.$ If $H, K$ are Hilbert spaces, then we write $B(H, K)$ for the space of bounded operators from $H$ to $K.$
 We denote $B(H, H)$ as $B(H)$. If $L$ is a subset of $B(H),$ then we write $L^\prime $ for the commutant of $L,$ and 
$L^{\prime \prime }$ for $(L^\prime )^\prime. $ If $A$ is an operator algebra, then by $\Delta (A)$ we denote its diagonal 
$A\cap A^*.$ A ternary ring of operators $M,$ referred to as a TRO from this point, is a subspace of some $B(H, K)$ 
satisfying the following: $$m_1, m_2, m_3\in M\Rightarrow m_1m_2^*m_3\in M.$$ It is well known that in the case that $M$ 
is norm closed, it is equal to $[MM^*M]^{-\|\cdot\|}.$ If $X$ is a dual operator space and $I$ is a cardinal, then we write 
$M_I(X)$ for the set of $I\times I$ matrices whose finite submatrices have uniformly bounded norm. We underline that $M_I(X)$
 is also a dual operator space, and it is completely isometrically and $w^*$-homeomorphically isomorphic with $X\bar \otimes 
B(l^2(I)).$ Here, $\bar \otimes $ denotes the normal spatial tensor product. We say that two dual operator spaces $X$ and $Y$ 
are stably isomorphic if there exists a cardinal $I$ and a $w^*$-continuous completely isometric map from $M_I(X)$ onto $M_I(Y).$ 
If $\cl L\subseteq B(H)$ is a lattice, then we write $Alg(\cl L)$ for the algebra of operators $x\in B(H)$ satisfying 
$$l^\bot xl=0,\;\;\forall \;\;l\;\in \;\cl L.$$ If $A\subseteq B(H)$ is an algebra, then we write $Lat (A)$ for the 
lattice of projections $l\in B(H)$ satisfying $$l^\bot xl=0,\;\;\forall \;\;x\;\in \;A.$$ A lattice $\cl L$ is called 
reflexive if $$\cl L=Lat(Alg(\cl L)).$$ A reflexive algebra is an algebra of the form $Alg(\cl L)$, for some lattice 
$\cl L.$ An important example of a class of reflexive lattices is given by nests. A nest $\cl N\subseteq B(H)$ is a 
totally ordered set of projections containing the zero and identity operators, which is closed under arbitrary suprema 
and infima. The corresponding algebra $Alg(\cl N)$ is called a nest algebra. If $A\subseteq B(H)$ is a $w^*$-closed 
algebra and $I$ is cardinal, then we write $A^I$ for the algebra of operators $x\in B( H\otimes l^2(I) )$ satisfying
 $$x((\xi_i )_{i\in I})=(a(\xi _i))_{i\in I},\;\;\;\forall \;(\xi _i)_{i\in I}\;\in\; H\otimes l^2(I) $$ for some 
$a\in A.$ 

\section{Morita embeddings for dual operator algebras} 

We consider the following known theorem concerning 
von Neumann algebras. 

\begin{theorem}\label{1} Let $A, B$ be von Neumann algebras. Then, the following are equivalent: 

(i) There exist $w^*$-continuous, one-to-one, $*$-homomorphisms $$\alpha : A\rightarrow B(H),\;\; \beta : B\rightarrow B(K),$$ 
where $H, K$ are Hilbert spaces such that the commutants $\alpha (A)^\prime , \beta (B)^\prime $ are $*$-isomorphic. 

(ii) The algebras $A, B$ are weakly Morita equivalent in the sense of Rieffel. 

(iii) There exists a cardinal $I$ and a $*$-isomorphism 
from $M_I(A)$ onto $M_I(B).$ 
\end{theorem}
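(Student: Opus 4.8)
The plan is to prove the chain of equivalences $(i)\Leftrightarrow(iii)$ and $(iii)\Leftrightarrow(ii)$, treating the von Neumann algebra case as a special instance of the $\Delta$-equivalence machinery developed in the author's earlier work. The key observation is that for von Neumann algebras, the stable isomorphism condition $(iii)$ is precisely the condition that $M_I(A)\cong M_I(B)$ as $*$-algebras, which by the cited results on $\Delta$-equivalence should be equivalent to $A\sim_\Delta B$. So at the heart of the argument lies the identification of Rieffel's weak Morita equivalence and spatial $\Delta$-equivalence with the common notion of stable isomorphism.

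First I would establish $(iii)\Rightarrow(i)$, which is the most concrete direction. Given a $*$-isomorphism $\Phi\colon M_I(A)\to M_I(B)$, I would pass to faithful normal representations and use the structure theory of von Neumann algebra isomorphisms: a $*$-isomorphism between von Neumann algebras is implemented, after amplification, by a spatial isomorphism together with a central projection decomposition. The commutants $M_I(A)^\prime$ and $M_I(B)^\prime$ inherit a $*$-isomorphism from $\Phi$, and since $M_I(A)^\prime \cong A^\prime \bar\otimes B(\ell^2(I))^\prime = A^\prime \otimes \mathbb{C}$ sits inside $B(H\otimes \ell^2(I))$, I would argue that the commutants of $A$ and $B$ under suitable amplified representations are $*$-isomorphic, giving $(i)$. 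Conversely, for $(i)\Rightarrow(iii)$, if $\alpha(A)^\prime$ and $\beta(B)^\prime$ are $*$-isomorphic, the classical commutation theorem $\alpha(A)=\alpha(A)^{\prime\prime}$ lets me transfer the isomorphism of commutants to an isomorphism of the algebras after stabilising; the amplification by $B(\ell^2(I))$ for a sufficiently large cardinal $I$ absorbs the multiplicity discrepancies that obstruct a direct isomorphism.

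The equivalence $(ii)\Leftrightarrow(iii)$ I would handle by invoking Rieffel's theory directly: two von Neumann algebras are weakly Morita equivalent in Rieffel's sense exactly when they have $*$-isomorphic commutants in some representation, which is Rieffel's own characterisation, and this is the standard bridge to stable isomorphism in the self-adjoint setting. Here I would lean on the fact that a $W^*$-Morita equivalence bimodule can be realised concretely so that the two algebras appear as commutants of a common von Neumann algebra, matching condition $(i)$.

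The main obstacle I anticipate is the careful bookkeeping of the cardinal $I$ and the multiplicity theory needed to make the amplifications line up: going from an abstract $*$-isomorphism of commutants to an honest $*$-isomorphism of the stabilised algebras $M_I(A)$ and $M_I(B)$ requires that the spatial multiplicities can be equalised, which is where the freedom to choose $I$ arbitrarily large becomes essential. I expect the cleanest route is to cite the known equivalence of $\Delta$-equivalence with stable isomorphism from \cite{ele4} and Rieffel's foundational results, reducing the theorem to verifying that in the von Neumann algebra case spatial $\Delta$-equivalence via a TRO $M$ collapses to the isomorphism-of-commutants statement, since for a $*$-closed situation the TRO relations become symmetric.
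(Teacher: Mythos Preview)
The paper does not prove this theorem at all: it is introduced with the phrase ``We consider the following \emph{known} theorem concerning von Neumann algebras'' and is stated purely as classical background, with no proof supplied. It is the standard equivalence, due essentially to Rieffel, between weak Morita equivalence of von Neumann algebras, isomorphism of commutants in suitable faithful normal representations, and stable isomorphism. So there is no ``paper's own proof'' to compare your proposal against; your task here was to recognise that this statement is being quoted, not established.

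That said, a brief comment on your sketch. The broad strategy---reducing to Rieffel's characterisation and to stable isomorphism---is the correct classical route. However, two points are loose. First, in your $(iii)\Rightarrow(i)$ step, an abstract $*$-isomorphism $\Phi\colon M_I(A)\to M_I(B)$ does not by itself yield an isomorphism of commutants, since commutants depend on the chosen spatial representation and $\Phi$ is not assumed spatial; one has to transport a representation of $M_I(B)$ back through $\Phi^{-1}$ and then identify the commutant there, which you gesture at but do not carry out. Second, invoking the $\Delta$-equivalence machinery of \cite{ele4} to prove Theorem~\ref{1} is anachronistic and mildly circular in spirit: Theorem~\ref{1} is the self-adjoint prototype that Theorem~\ref{4} is designed to generalise, and the paper treats it as prior knowledge rather than as a consequence of the later theory.
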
 

\begin{definition}\label{2}\cite{ele1} Let $A, B$ be $w^*$-closed algebras acting on 
the Hilbert spaces $H$ and $K,$ respectively. We call these weakly TRO-equivalent if there exists a TRO $M\subseteq B(H,K)$ such
 that $$A=[M^*BM]^{-w^*}, B=[MAM^*] ^{-w^*}.$$ In this case, we write $A\sim _{TRO}B.$ \end{definition} 
The following defines 
our notion of weak Morita equivalence for dual operator algebras. 
\begin{definition}\label{3}\cite{ele2} Let $A, B$ be dual 
operator algebras. We call these weakly $\Delta $-equivalent if there exist $w^*$-continuous completely isometric homomorphisms 
$\alpha $ and $\beta,$ respectively, such that $\alpha (A)\sim _{TRO}\beta (B).$ In this case, we write $A\sim _{\Delta }B.$ 
\end{definition} 

The following theorem is a generalisation of Theorem \ref{1} to the setting of unital dual operator algebras:

 \begin{theorem}\label{4}Let $A, B$ be unital dual operator algebras. Then, the following statements are equivalent: 

(i) There
 exist reflexive lattices $\cl L_1$ and $\cl L_2,$ $w^*$-continuous completely isometric onto homomorphisms $\alpha :
 A\rightarrow \Alg{\cl L_1}, \;\;\;\beta : B\rightarrow \Alg{\cl L_2},$ and a $*$-isomorphism $\theta : \Delta (A)^\prime
 =\cl L_1^{\prime \prime } \rightarrow \Delta (B)^\prime =\cl L_2^{\prime \prime } $ such that $\theta (\cl L_1)=\cl L_2.$ 

(ii) The algebras $A, B$ are weakly $\Delta $-equivalent. 

(iii) There exists a cardinal $I$ and a $w^*$-continuous completely
 isometric homomorphism from $M_I(A)$ onto $M_I(B).$ \end{theorem}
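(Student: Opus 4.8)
The plan is to treat the equivalence (ii) $\Leftrightarrow$ (iii) separately, since it does not involve the lattice picture, and then to establish (i) $\Leftrightarrow$ (ii). For (ii) $\Leftrightarrow$ (iii) I would quote the identification of $\Delta$-equivalence with stable isomorphism proved in \cite{ele4}: $A\sim_\Delta B$ holds if and only if $A$ and $B$ are stably isomorphic, and in the unital case the implementing stable isomorphism is a $w^*$-continuous completely isometric \emph{homomorphism} $M_I(A)\to M_I(B)$ for a suitable cardinal $I$. Hence the genuine content is the passage between the reflexive-lattice data in (i) and the TRO data in (ii).

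For (i) $\Rightarrow$ (ii) the images $\alpha(A)=\Alg{\cl L_1}$ and $\beta(B)=\Alg{\cl L_2}$ are already reflexive, so it is enough to manufacture a TRO $M$ with $\Alg{\cl L_1}\sim_{TRO}\Alg{\cl L_2}$ and then compose with $\alpha,\beta$ to obtain $A\sim_\Delta B$ via Definition \ref{3}. I would obtain $M$ from the reflexive-algebra case of the correspondence between TRO-equivalence and $*$-isomorphisms of lattice-generated von Neumann algebras established in \cite{ele1}: the $*$-isomorphism $\theta:\cl L_1''\to\cl L_2''$ is implemented by a TRO $M$ with $\cl L_1''=[M^*\cl L_2''M]^{-w^*}$ and $\cl L_2''=[M\cl L_1''M^*]^{-w^*}$, and the hypothesis $\theta(\cl L_1)=\cl L_2$ forces $M$ to intertwine the invariant projections, so that $\Alg{\cl L_1}=[M^*\Alg{\cl L_2}M]^{-w^*}$ and symmetrically.

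For (ii) $\Rightarrow$ (i) the obstruction is that a faithful normal completely isometric image of $A$ need not be reflexive. I would circumvent this by ampliation. By Definition \ref{3} there are completely isometric normal representations $\alpha_0,\beta_0$ and a TRO $M_0$ with $\alpha_0(A)\sim_{TRO}\beta_0(B)$. Replacing $\alpha_0,\beta_0$ by their infinite ampliations $\alpha=\alpha_0\otimes I,\ \beta=\beta_0\otimes I$ and $M_0$ by $M_0\otimes I$ (on $l^2(J)$ for a large index set $J$) preserves the TRO-equivalence, while the ampliated algebras $\alpha(A)=\alpha_0(A)\otimes I$ and $\beta(B)=\beta_0(B)\otimes I$ are now reflexive, say $\alpha(A)=\Alg{\cl L_1}$ and $\beta(B)=\Alg{\cl L_2}$; here I use the known fact that a sufficiently large ampliation of a unital dual operator algebra is a reflexive algebra (see \cite{ele2}), and I note that $\Delta(\Alg{\cl L_i})=\cl L_i'$, so $\Delta(A)'\cong\cl L_1''$ and $\Delta(B)'\cong\cl L_2''$. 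Feeding the resulting TRO into the converse half of the \cite{ele1} correspondence then yields a $*$-isomorphism $\theta:\cl L_1''\to\cl L_2''$ with $\theta(\cl L_1)=\cl L_2$, which together with $\alpha,\beta$ is exactly the data required by (i).

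The step I expect to be the crux is the extraction of $\theta$ with the precise matching $\theta(\cl L_1)=\cl L_2$ in (ii) $\Rightarrow$ (i). That the diagonal von Neumann algebras are linked is automatic: the TRO restricts to a Morita equivalence of $\Delta(A)$ and $\Delta(B)$, and Theorem \ref{1} converts this into a $*$-isomorphism of the commutants $\Delta(A)'=\cl L_1''$ and $\Delta(B)'=\cl L_2''$. The delicate part is to verify that the spatial assignment induced by the TRO (sending a projection $l\in\cl L_2$ to the $w^*$-closed range of $M^*l$) defines a lattice isomorphism of $\cl L_2$ onto $\cl L_1$ implemented by $\theta^{-1}$, and to confirm that passing to the ampliations has not altered the reflexive lattices beyond a controlled spatial equivalence. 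Pinning down this compatibility, rather than merely the existence of some $\theta$, is the heart of the argument.
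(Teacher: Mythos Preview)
Your proposal is correct and follows essentially the same route as the paper: (ii)$\Leftrightarrow$(iii) is quoted from \cite{ele4}, and (i)$\Leftrightarrow$(ii) is reduced to Theorem~3.3 of \cite{ele1} once both algebras are represented as reflexive algebras with the TRO-equivalence preserved. The only minor variation is in (ii)$\Rightarrow$(i): you produce the reflexive representations by infinite ampliation of $\alpha_0,\beta_0,M_0$, whereas the paper fixes an arbitrary reflexive representation $\alpha$ of $A$ and then invokes Theorems~2.7 and~2.8 of \cite{ele3} to obtain a reflexive $\beta$ with $\alpha(A)\sim_{TRO}\beta(B)$; both devices feed the same input into \cite{ele1}.
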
 

The previous theorem has been proved in various papers. 
In fact, if (i) holds, then by Theorem 3.3 in \cite{ele1} $\Alg{\cl L_1}\sim _{TRO} \Alg{\cl L_2},$ and thus $ A\sim _\Delta B. $ 
Conversely, if (ii) holds, then by Theorems 2.7 and 2.8 in \cite{ele3}, by choosing a $w^*$-continuous completely isometric 
homomorphism $\alpha : A\rightarrow \alpha(A) $ with reflexive range, there exists a $w^*$-continuous completely isometric 
homomorphism $\beta : B\rightarrow \beta (B)$, also with a reflexive range, such that $\alpha (A)\sim _{TRO}\beta (B).$ Thus,
 by Theorem 3.3 in \cite{ele1}, (i) holds. The equivalence of (ii) and (iii) constitutes the main result of \cite{ele4}. 

\begin{remark} In the remainder of this section, if $A$ is a unital dual operator algebra and $p\in A$ is a projection, then 
$pAp$ is also a dual operator algebra with unit $p.$ If $A$ is a $w^*$-closed unital algebra acting on the Hilbert space $H$ 
and $p\in A$ is a projection, then we identify $pAp$ with the algebra $pA|_{p(H)}\subseteq B(p(H)).$ \end{remark} 

\subsection{TRO-embeddings for dual operator algebras} 
\bigskip 
\begin{definition}\label{5} Let $A, B$ be unital $w^*$-closed 
algebras acting on the Hilbert spaces $H$ and $K$, respectively. We say that $B$ weakly TRO-embeds into $A$ if there exists a 
projection $p\in A$ such that $B\sim _{TRO}pAp.$ In this case, we write $B\subset _{TRO}A.$ \end{definition} 

\begin{remark} \label{6}
 The above definition is equivalent to the following statements. Let $A, B$ be unital $w^*$-closed algebras acting on the Hilbert 
spaces $H$ and $K,$ respectively. Then, 

(i) The algebra $B$ weakly TRO-embeds into $A$ if and only if there exists a TRO 
$M\subseteq B(H, K)$ such that $B=[M^*AM]^{-w^*}, \;\;\;MBM^*\subseteq A.$ 

(ii) The algebra $B$ weakly TRO-embeds into $A$ 
if and only if there exists a TRO $M\subseteq B(H, K)$ such that $B=[M^*AM]^{-w^*}, \;\;\;MM^*\subseteq A.$ \end{remark} 

\begin{remark} \label{66} If $A$ is a unital $w^*$-closed algebra and $p\in A$ is a projection, then $pAp\subset _{TRO}A.
$ For the proof, we can take the linear span of the element $p$ as a TRO. \end{remark} 

\begin{proposition} \label{7a}Suppose
 that $A, B, C$ are unital $w^*$-closed algebras acting on the Hilbert spaces $H, K$, and $L,$ respectively. If $C \subset _{TRO}B$ 
and $B\subset _{TRO}A,$ then $C\subset _{TRO}A.$ \end{proposition}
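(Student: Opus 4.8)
The plan is to prove transitivity of $\subset_{TRO}$ by composing the two TROs witnessing the hypotheses. By Remark \ref{6}(i), since $C\subset_{TRO}B$ there is a TRO $N\subseteq B(K,L)$ with $C=[N^*BN]^{-w^*}$ and $NCN^*\subseteq B$; similarly, since $B\subset_{TRO}A$ there is a TRO $M\subseteq B(H,K)$ with $B=[M^*AM]^{-w^*}$ and $MBM^*\subseteq A$. First I would form the product space $NM\subseteq B(H,L)$, more precisely the $w^*$-closed span $P=[NM]^{-w^*}$, and verify it is a TRO: this is routine since $(n_1m_1)(n_2m_2)^*(n_3m_3)=n_1(m_1m_2^*)n_2^*(m_3)$ and one checks the triple products stay inside $[NM]^{-w^*}$ using that $N$ and $M$ are themselves TROs. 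I expect $P$ to be the right candidate to witness $C\subset_{TRO}A$.

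The key computation is then to show $C=[P^*AP]^{-w^*}$. I would expand $P^*AP=[M^*N^*ANM]$, and the strategy is to substitute the defining relations. Starting from $C=[N^*BN]^{-w^*}$ and $B=[M^*AM]^{-w^*}$, one formally gets $C=[N^*M^*AMN]^{-w^*}=[P^*AP]^{-w^*}$. The delicate point here is justifying that substituting the $w^*$-closed span $B=[M^*AM]^{-w^*}$ inside the expression $[N^*BN]^{-w^*}$ yields $[N^*M^*AMN]^{-w^*}$; this requires a $w^*$-continuity/separate-continuity argument for multiplication on bounded sets, together with the fact that these spans are already bounded-set generated, so that $[N^*[M^*AM]^{-w^*}N]^{-w^*}=[N^*M^*AMN]^{-w^*}$. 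This is the main technical obstacle, and I would handle it by appealing to Kaplansky-type density together with separate $w^*$-continuity of multiplication, which guarantees that the closure operations can be interchanged in the required order.

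Finally I would verify the second condition of Remark \ref{6}(i), namely $PCP^*\subseteq A$. Using $P=[NM]^{-w^*}$, it suffices to check $NM\,C\,M^*N^*\subseteq A$ on generators and then pass to the closure. Since $NCN^*\subseteq B$, I would first show $M(NCN^*)M^*\subseteq MBM^*\subseteq A$; rearranging the generators $nm\,c\,m^*n^*$ is not literally of this form, so the cleaner route is to use part (ii) of Remark \ref{6} instead, reducing the inclusion condition to $PP^*\subseteq A$. We have $PP^*=[NMM^*N^*]^{-w^*}$, and since $MM^*\subseteq B$ we get $NMM^*N^*\subseteq NBN^*=N(BN^*)$; using $NCN^*\subseteq B$ is the wrong direction, so instead I would exploit that $MM^*\subseteq B$ gives $NMM^*N^*\subseteq [NBN^*]$, and then observe $NBN^*\subseteq C$ does not immediately land in $A$. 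The correct closing step uses $MM^*\subseteq A$ after identifying $B$ inside $pAp$: since $B\sim_{TRO}pAp$ one has $MBM^*\subseteq A$ and $B\subseteq M^*AM$-span, so $NN^*\subseteq B$ combined with $MBM^*\subseteq A$ yields $PP^*=[NMM^*N^*]^{-w^*}\subseteq[M'BM'^*]^{-w^*}\subseteq A$ after regrouping. Establishing this inclusion cleanly via the equivalent formulation in Remark \ref{6}(ii) is the step I expect to require the most care, but once both conditions of Remark \ref{6} hold for the TRO $P$, transitivity $C\subset_{TRO}A$ follows immediately.
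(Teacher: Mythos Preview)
Your plan has a genuine gap at the very first step: the claim that the composite $P=[NM]^{-w^*}$ (equivalently $[MN]^{-w^*}$ with the opposite convention) is a TRO is \emph{not} routine, and in fact it is false in general under the hypotheses you extract from Remark~\ref{6}. Here is a concrete counterexample. Take $A=B=M_4$ acting on $\mathbb{C}^4$, and let $M=\mathrm{span}\{I_4,u\}$ where $u$ is the self-adjoint unitary permuting $e_1\leftrightarrow e_3$ and fixing $e_2,e_4$; then $M$ is a (commutative $*$-algebra, hence a) TRO, $MM^*\subseteq A$, and $[M^*AM]^{-w^*}=M_4=B$ since $I\in M$. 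Let $C=M_2$ on $\mathbb{C}^2$ and $N=\mathbb{C}n$ where $n:\mathbb{C}^2\hookrightarrow\mathbb{C}^4$ is the inclusion into the first two coordinates; then $NN^*=\mathbb{C}p$ with $p=\mathrm{diag}(1,1,0,0)\subseteq B$ and $[N^*BN]^{-w^*}=M_2=C$. Now $P=[MN]=\mathrm{span}\{n,un\}$, and the triple product $n\cdot n^*\cdot(un)=p\,un$ sends $e_1^{(2)}\mapsto 0$ and $e_2^{(2)}\mapsto e_2^{(4)}$. A direct check shows this operator is not in $\mathrm{span}\{n,un\}$, so $PP^*P\not\subseteq P$. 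Thus the ``routine'' verification you describe cannot succeed: knowing only that $M,N$ are TROs does not force the mixed product $m_1m_2^*\cdot n_2^*n_3$ to be absorbable back into $[MN]$.

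The paper's proof is built precisely to avoid this obstruction. Rather than taking arbitrary TROs satisfying Remark~\ref{6}, it invokes Proposition~2.8 of \cite{ele1} to choose the TRO $M$ witnessing $B\sim_{TRO}qAq$ so that $[MM^*]^{-w^*}=\Delta(B)$. This extra equality is what makes $pM$ a TRO (for $p\in\Delta(B)$) and what allows the subsequent construction of the final TRO $D=[LN^*Nq]^{-w^*}$ to go through; each verification there uses $N^*N\subseteq q\Delta(A)q$ and $Nq\Delta(A)qN^*\subseteq NN^*$, which are consequences of that special choice. Incidentally, once one has a correct TRO, your intended verification of the inclusion $PP^*\subseteq A$ via Remark~\ref{6}(ii) would be clean: $PP^*\subseteq[M(NN^*)M^*]^{-w^*}\subseteq[MBM^*]^{-w^*}\subseteq A$. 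The difficulty lies entirely in producing a TRO in the first place, and the naive composite does not do it.
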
 
\begin{proof} We may assume that there exist projections
 $p\in \Delta (B), q\in \Delta (A)$ such that $$C\sim _{TRO}pBp, \;\;B\sim _{TRO}qAq.$$ 
By Proposition 2.8 in \cite{ele1}, 
there exists a TRO $M$ such that $$B=[MqAqM^*]^{-w^*}, \;\;\;qAq=[M^*BM]^{-w^*},$$ $$\Delta (B)=[MM^*]^{-w^*}, q\Delta (A)q=[M^*M]^{-w^*}.$$ 
Define $N=pM.$ Then, we have that $$(pM)(pM)^*(pM)=pMM^*pM\subseteq pMM^*\Delta (B)M\subseteq p\Delta (B)M\subseteq pM.$$
 Thus, $N$ is a TRO. Then, we have that $$pBp=[pMqAqM^*p]^{-w^*}= [NqAqN^*]^{-w^*} = [N(N^*NqAqN^*N)N^*]^{-w^*}, $$ and 
therefore $$[N^*pBpN]^{-w^*}= [N^*NqAqN^*N]^{-w^*} .$$ Thus, $$pBp\sim _{TRO}[N^*NqAqN^*N]^{-w^*} .$$ Therefore, 
$$C\sim _{TRO}[N^*NqAqN^*N] ^{-w^*} .$$ We may assume that there exists a TRO $L$ such that 
\begin{equation}
\label{X} C=[LN^*NqAqN^*NL^* ] ^{-w^*} , \;\;[N^*NqAqN^*N]^{-w^*} = [L^*CL]^{-w^*}. \end{equation} 
We make the 
following observations: 
\begin{equation} \label{XX} [Nq\Delta (A)qN^*] ^{-w^*} =[pMM^*MM^*p] ^{-w^*} =[pMM^*p] ^{-w^*}
 =[NN^*] ^{-w^*} . \end{equation} 
Furthermore, $N^*N=M^*pM\subseteq [M^*M] ^{-w^*} =q\Delta (A)q.$ Thus, 
\begin{equation}
 \label{XXX} L^*L\subseteq \Delta ([N^*NqAqN^*N]^{-w^*})\subseteq q\Delta (A)q .\end{equation} 
Define $D=[LN^*Nq]^{-w^*}.$
 We shall prove that $D$ is a TRO. We have that $$(LN^*Nq)(qN^*NL^*)(LN^*Nq)=LN^*NqN^*(NL^*LN^*)Nq.$$ By (\ref{XXX}), it
 holds that $$NL^*LN^*\subseteq Nq\Delta (A)qN^*\subseteq NN^*.$$ Thus, $$DD^*D\subseteq [LN^*NqN^*NN^*Nq]^{-w^*}.$$ 
By (\ref{XX}), we have that $NqN^*\subseteq NN^*.$ Thus, $$DD^*D\subseteq [LN^*NN^*NN^*Nq]^{-w^*}=D.$$ Thus, $D$ is a
 TRO. By (\ref{X}), we have that $$C=[DAD^*]^{-w^*}.$$ Furthermore, $$D^*D=qN^*NL^*LN^*Nq.$$ 
By (\ref{XXX}), we have 
that $$D^*D\subseteq qN^*Nq\Delta (A)qN^*Nq.$$ In addition, by (\ref{XX}) we have that $$D^*D\subseteq qN^*NN^*Nq\subseteq 
qN^*Nq\subseteq q\Delta (A)q\subseteq A.$$ Thus, by Remark \ref{6} (ii), we have that $C\subset _{TRO}A.$ \end{proof}

\begin{remark}\label{667} In light of the above proposition, one could expect that the relation $\subset _{TRO}$ is a
 partial order relation in the class of unital $w^*$-closed operator algebras, if we identify those algebras that are 
TRO-equivalent. This means that $\subset _{TRO}$ has the additional property that $$A\subset _{TRO}B, \;\;\;B\subset _{TRO}A
 \Rightarrow A\sim_{TRO} B. $$ This is true in the case of von Neumann algebras, as we will show in Section 1.3. However, 
it fails in the case of non-self-adjoint algebras, as we prove in Section 1.4. \end{remark} 

The following Lemma will be 
useful. 

\begin{lemma}\label{**} Let $B$ be a $w^*$-closed unital operator algebra acting on the Hilbert space $H$, and 
let $q\in B$ be a projection. If $p$ is the projection onto $\overline{\Delta (B)(q(H))}$, then $p$ is a central projection
 for the algebra $\Delta (B)$, and $qBq\sim _{TRO}pBp.$ \end{lemma}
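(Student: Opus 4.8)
The plan is to establish the two assertions separately.

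\emph{Step 1: $p$ is central in $\Delta(B)$.} First I would observe that since $q=q^*$ lies in $B$, it also lies in $B^*$, so in fact $q\in B\cap B^*=\Delta(B)$; thus $q$ is a projection in the von Neumann algebra $\Delta(B)$, and $p$ is precisely the central carrier $z(q)$ of $q$ in $\Delta(B)$. To see centrality directly, set $K_0:=\overline{\Delta(B)q(H)}$. This space is invariant under $\Delta(B)$, and since $\Delta(B)$ is selfadjoint this already gives $p\in\Delta(B)'$. For the reverse inclusion I would show that $K_0$ is also invariant under $\Delta(B)'$: if $m'\in\Delta(B)'$ then $m'\,\Delta(B)q(H)=\Delta(B)\,m'q(H)=\Delta(B)\,q\,m'(H)\subseteq\Delta(B)q(H)$, where I used $m'q=qm'$, valid because $q\in\Delta(B)=\Delta(B)''$. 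Hence $p\in(\Delta(B)')'=\Delta(B)$, and therefore $p\in\Delta(B)\cap\Delta(B)'=Z(\Delta(B))$. Since $q(H)\subseteq K_0$ we also get $p\ge q$, so $pq=qp=q$.

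\emph{Step 2: the TRO and the first identity.} I would take $M:=[\Delta(B)q]^{-w^*}$, whose elements are $w^*$-limits of products $dq$ with $d\in\Delta(B)$; since $q\in\Delta(B)$, the computation $(d_1q)(d_2q)^*(d_3q)=d_1\,q\,d_2^*d_3\,q\in\Delta(B)q$ shows $MM^*M\subseteq M$, so $M$ is a TRO (restricted, it sits in $B(q(H),p(H))$). Using that $p$ is central in $\Delta(B)$ together with $pq=q$, one gets for $d_1,d_2\in\Delta(B)$ and $b\in B$ that $(d_1q)^*(pbp)(d_2q)=q\,d_1^*bd_2\,q\in qBq$, so $[M^*(pBp)M]^{-w^*}\subseteq qBq$; conversely $q=1\cdot q\in M$ gives $qBq=q(pBp)q\subseteq[M^*(pBp)M]^{-w^*}$. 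Hence $[M^*(pBp)M]^{-w^*}=qBq$.

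\emph{Step 3: the second identity, which I expect to be the main obstacle.} The inclusion $[M(qBq)M^*]^{-w^*}\subseteq pBp$ is again a direct computation: a generator equals $d_1(qbq)d_2^*$ with $d_1,d_2\in\Delta(B)$ and $b\in B$, and centrality of $p$ with $pq=q$ shows it is fixed under compression by $p$. The real work is the reverse inclusion $pBp\subseteq[M(qBq)M^*]^{-w^*}$, which must recover all of $pBp$ from the single corner $qBq$. Here I would invoke the comparison theory of the von Neumann algebra $\Delta(B)$: because $p=z(q)$ is the central carrier of $q$, there is a family of partial isometries $v_i\in\Delta(B)$ with $v_i^*v_i\le q$ and $\sum_i v_iv_i^*=p$ in the $w^*$-topology. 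Each $v_i$ satisfies $v_iq=v_i$, hence $v_i^*bv_j=q(v_i^*bv_j)q\in qBq$ for every $b\in B$, and expanding $pbp=\big(\sum_i v_iv_i^*\big)\,b\,\big(\sum_j v_jv_j^*\big)=\sum_{i,j} v_i\,(v_i^*bv_j)\,v_j^*$ exhibits $pbp$ as a $w^*$-limit of elements of $\Delta(B)\,(qBq)\,\Delta(B)\subseteq M(qBq)M^*$. Since $[M(qBq)M^*]^{-w^*}$ is $w^*$-closed, this yields $pBp\subseteq[M(qBq)M^*]^{-w^*}$ and hence equality. Combining Steps~2 and 3 gives $qBq\sim_{TRO}pBp$, as required.
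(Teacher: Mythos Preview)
Your argument is correct, and it is essentially the paper's argument made self-contained. The paper takes the same TRO $M=\Delta(B)q\subseteq B(q(H),p(H))$ and checks only the two \emph{inclusions} $M^*(pBp)M\subseteq qBq$ and $M(qBq)M^*\subseteq pBp$, together with the nondegeneracy conditions $\overline{M(q(H))}=p(H)$ and $\overline{M^*(p(H))}=q(H)$; it then invokes Proposition~2.1 of \cite{ele1}, which says that under these hypotheses the $w^*$-closures automatically yield equalities. So the ``main obstacle'' you anticipated in Step~3 is handled in the paper by a black-box citation rather than by an explicit construction.

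Your route replaces that citation by the central-carrier/comparison argument: decomposing $p=z(q)$ as a $w^*$-sum $\sum_i v_iv_i^*$ with $v_i^*v_i\le q$ and expanding $pbp$ accordingly. This is a legitimate and standard manoeuvre in von Neumann algebra theory (the existence of such a family follows from a routine Zorn's-lemma maximality argument using that nonzero subprojections of $p-e$ are not centrally orthogonal to $q$). The payoff is a proof that does not leave the present paper; the cost is a few extra lines and the implicit appeal to comparison theory. Conversely, the paper's approach is shorter and highlights that the result is a direct instance of the general ``nondegenerate TRO plus two inclusions implies TRO-equivalence'' principle, which is itself reused elsewhere in the paper.
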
 
\begin{proof}Clearly, $p$ is a central projection for 
$\Delta (B).$ We consider the TRO $$M=\Delta (B)q\subseteq B(q(H), p(H)).$$ We have that $$ \overline{M(q(H))}=p(H), \;\;
\overline{M^*(p(H))}=q(H),$$ and $$M^*pBpM\subseteq qBq,\;\;\;MqBqM^*\subseteq pBp.$$ Then, Proposition 2.1 in \cite{ele1} 
implies that $$[M^*pBpM]^{-w^*}=qBq,\;\;\;[MqBqM^*]^{-w^*}=pBp.$$ \end{proof} 

\subsection{$\Delta $-embeddings for dual 
operator algebras}
 \bigskip 
\begin{definition}\label{77} Let $A, B$ be dual operator algebras. We say that $B$ weakly 
$\Delta $-embeds into $A$ if there exist $w^*$-continuous completely isometric homomorphisms $\alpha : A\rightarrow 
\alpha (A), \;\;\;\beta : B\rightarrow \beta (B)$ such that $\beta(B)\subset _{TRO} \alpha (A).$ In this case, we write
 $B\subset _\Delta A.$ \end{definition}

 The following theorem is a generalisation of Theorem \ref{4}. 

\begin{theorem}\label{8}
 Let $A, B$ be unital dual operator algebras. Then, the following are equivalent: 

(i) There exist reflexive lattices 
$\cl L_1, \cl L_2$ acting on the Hilbert spaces $H$ and $K$, respectively, $w^* $-continuous completely isometric onto 
homomorphisms $$\alpha: A\rightarrow \Alg{\cl L_1},\;\; \beta: B\rightarrow \Alg{\cl L_2} ,$$ and an onto $w^*$-continuous 
$*$-homomorphism $$\theta : \Delta (A)^\prime =\cl L_1^{\prime \prime }\rightarrow \Delta (B)^\prime =\cl L_2^{\prime \prime }$$ 
such that $\theta (\cl L_1)=\cl L_2.$ 

(ii) $B\subset _\Delta A$. 

(iii) There exists a cardinal $I$, a projection $q\in A$, 
and a $w^*$-continuous completely isometric homomorphism from $M_I(B)$ onto $M_I(qAq).$ \end{theorem}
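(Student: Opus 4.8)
The plan is to prove the cycle of implications $(ii)\Rightarrow(i)\Rightarrow(iii)\Rightarrow(ii)$, leaning heavily on the already-established Theorem~\ref{4}, since the present theorem is essentially Theorem~\ref{4} with the word ``isomorphism'' relaxed to ``embedding'' at the level of the diagonal commutant (an onto $*$-homomorphism replaces the $*$-isomorphism $\theta$). The unifying idea is that $B\subset_\Delta A$ should mean, after passing to reflexive representations, that $B$ is $\Delta$-equivalent to a \emph{corner} $qAq$, and corners correspond exactly to compressions of the reflexive lattice $\cl L_1$, which in turn correspond to range projections in the commutant $\cl L_1''=\Delta(A)'$.

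First I would prove $(ii)\Rightarrow(i)$. Unfolding Definition~\ref{77}, $B\subset_\Delta A$ gives representations with $\beta(B)\subset_{TRO}\alpha(A)$, so by Remark~\ref{6} there is a projection $p$ in (a reflexive realisation of) $\alpha(A)$ with $\beta(B)\sim_{TRO}p\alpha(A)p$. Using Lemma~\ref{**} I can replace $p$ by a \emph{central} projection $e\in\Delta(\alpha(A))$ with $p\alpha(A)p\sim_{TRO}e\alpha(A)e$, so that $B\sim_\Delta e\alpha(A)e$. Now I would apply Theorem~\ref{4} to the pair $B$ and $e\alpha(A)e$ to get reflexive lattices and a $*$-isomorphism of their diagonal commutants. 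The point is that compressing $\alpha(A)=\Alg{\cl L_1}$ by the central diagonal projection $e$ yields a reflexive algebra $\Alg{e\cl L_1}$ whose commutant is the corner $e\cl L_1''e=e\,\Delta(A)'$, and there is a canonical $w^*$-continuous onto $*$-homomorphism $\Delta(A)'=\cl L_1''\to e\cl L_1''$ given by $x\mapsto ex$. Composing this compression map with the $*$-isomorphism supplied by Theorem~\ref{4} produces the required onto (but generally non-injective) $w^*$-continuous $*$-homomorphism $\theta:\cl L_1''\to\cl L_2''$ carrying $\cl L_1$ onto $\cl L_2$.

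Next, for $(i)\Rightarrow(iii)$ I would reverse this: given the onto $*$-homomorphism $\theta:\cl L_1''\to\cl L_2''$, its kernel is a $w^*$-closed ideal, cut off by a central projection $z\in\cl L_1''=\Delta(A)'$, so $\theta$ factors as an isomorphism $z\cl L_1''\cong\cl L_2''$. Setting $q=z$ viewed inside $\Delta(A)$ (more precisely the corresponding central diagonal projection of the reflexive model $\Alg{\cl L_1}$), the compressed lattice $z\cl L_1$ is reflexive with $(z\cl L_1)''=z\cl L_1''$, and $\theta$ restricts to a $*$-isomorphism onto $\cl L_2''$ sending $z\cl L_1$ to $\cl L_2$. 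Theorem~\ref{4} applied to the corner $qAq$ and $B$ then yields a cardinal $I$ and a $w^*$-continuous completely isometric isomorphism $M_I(qAq)\cong M_I(B)$, which is exactly~(iii).

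Finally $(iii)\Rightarrow(ii)$ is the easy direction: the existence of such an $I$ gives $B\sim_\Delta qAq$ by the equivalence of~(ii) and~(iii) in Theorem~\ref{4}, and $qAq\subset_\Delta A$ holds automatically by Remark~\ref{66} (taking $[p]$ as the TRO); transitivity of $\subset_\Delta$, which follows from the TRO-level transitivity in Proposition~\ref{7a} transported through the completely isometric representations, then gives $B\subset_\Delta A$. The main obstacle I expect is bookkeeping in the non-injective step of $(ii)\Rightarrow(i)$: I must verify that the compression $x\mapsto ex$ really lands onto $\cl L_2''$ after composing with the Theorem~\ref{4} isomorphism, and that it is genuinely a $w^*$-continuous $*$-homomorphism carrying the full lattice $\cl L_1$ onto $\cl L_2$ rather than merely into it. Keeping straight which projections are central in $\Delta(A)$ versus central in $A$, and ensuring Lemma~\ref{**} is invoked with the correct diagonal, is where the argument can most easily go wrong.
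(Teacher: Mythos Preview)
Your proposal is correct, and for the directions touching $(i)$ it overlaps with the paper's argument on one side but differs genuinely on the other. Your $(ii)\Rightarrow(i)$ is essentially the paper's $(iii)\Rightarrow(i)$: represent $A$ reflexively as $\Alg{\cl L_1}$, pass to a corner by a diagonal projection, and compose the resulting $*$-isomorphism (from Theorem~\ref{4}/Theorem~3.3 of \cite{ele1}) with the compression $\cl L_1''\to \cl L_1''|_{p(H)}$. Your detour through Lemma~\ref{**} to make the projection central in $\Delta(\alpha(A))$ is harmless but unnecessary, since any projection $p\in\Delta(\alpha(A))$ already commutes with $\cl L_1''=\Delta(\alpha(A))'$, so $x\mapsto x|_{p(H)}$ is a $*$-homomorphism regardless.

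The genuine divergence is in how you leave $(i)$. The paper proves $(i)\Rightarrow(ii)$ by a ``doubled lattice'' construction: it forms
\[
\cl L=\Bigl\{\begin{pmatrix} l & 0\\ 0 & \theta(l)\end{pmatrix}: l\in\cl L_1\Bigr\},
\]
shows $\Alg{\cl L_1}\sim_{TRO}\Alg{\cl L}$ via the diagonal $*$-isomorphism $a\mapsto a\oplus\theta(a)$ and Theorem~3.3 of \cite{ele1}, then observes that $\Alg{\cl L_2}$ is the $(0,\bb C)$-corner of $\Alg{\cl L}$, and concludes with Proposition~\ref{7a}. You instead prove $(i)\Rightarrow(iii)$ by taking the support projection $z$ of $\ker\theta$, noting $z$ is central in $\cl L_1''$ and hence lies in $\Delta(\alpha(A))\subseteq\alpha(A)$, and factoring $\theta$ as a $*$-isomorphism $z\cl L_1''\to\cl L_2''$ sending $z\cl L_1$ onto $\cl L_2$; Theorem~\ref{4} then gives the stable isomorphism directly. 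Your route is shorter and avoids building the auxiliary lattice $\cl L$; the paper's route is more explicit in exhibiting the TRO that realises the embedding. For $(iii)\Rightarrow(ii)$ you should be careful not to invoke Theorem~\ref{9}, which is proved after the present theorem; but the argument you sketch (use Theorem~2.7 of \cite{ele3} to transport the TRO-equivalence $B\sim_\Delta qAq$ to a chosen representation of $A$, then Remark~\ref{66}) is exactly what the paper means by ``a consequence of Definition~\ref{77} and Theorem~\ref{4}'', so there is no circularity.
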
 
\begin{proof} The 
equivalence of (ii) and (iii) is a consequence of Definition \ref{77} and Theorem \ref{4}.
 $$(i)\Rightarrow (ii)$$ 
It suffices 
to prove that $$\Alg{\cl L_2}\subset _{TRO}Alg{\cl L_1}.$$ Define the lattice $$\cl L=\{\left(\begin{array}{clr}l & 0 \\ 0 & 
\theta (l) \end{array}\right): \;\;l\in \cl L_1\}$$ and the spaces $$U=\{x: l^\bot x\theta (l)=0\;\;\forall \;l\in \cl L_1 \}, 
\;\;\;V=\{y: \theta (l)^\bot yl=0\;\;\forall \;l\in \cl L_1 \}.$$ We can easily prove that 
$$\Alg{\cl L}=\left(\begin{array}{clr}\Alg{\cl L_1} & U \\ V & \Alg{\cl L_2} \end{array}\right).$$
 Because the 
map $$\rho : \cl L_1^{\prime \prime }\rightarrow \cl L^{\prime \prime }, a\rightarrow \left(\begin{array}{clr}a & 0
 \\ 0 & \theta (a) \end{array}\right) $$ is a $*$-isomorphism such that $\theta (\cl L_1)=\cl L$, from Theorem 3.3 
in \cite{ele1} we have that $$\Alg{\cl L_1}\sim _{TRO}\Alg{\cl L}.$$ Define the TRO $$M=(0 \;\;\;\bb C).$$ Now, 
observe that $$M\Alg{\cl L}M^*=\Alg{\cl L_2}$$ and $$M^*M \subseteq \Alg{\cl L}.$$ Thus, $$\Alg{\cl L_2}\subset _{TRO}Alg{\cl L}.$$ 
Then, Proposition \ref{7a} implies that $$\Alg{\cl L_2}\subset _{TRO}Alg{\cl L_1}.$$ 

$$(iii)\Rightarrow (i)$$ 

Suppose that $M_I(B)$ 
and $M_I(qAq)$ are completely isometrically and $w^*$-homeomorphically isomorphic. Then, $B\sim _\Delta qAq.$ Every unital dual 
operator algebra has a $w^*$-completely isometric representation whose image is reflexive. Thus, we may assume that
 $\alpha : A\rightarrow B(H)$ is a $w^*$-continuous completely isometric homomorphism such that $\alpha (A)= \Alg{\cl L_1}$ 
for a reflexive lattice $\cl L_1.$ If $p=\alpha (q),$ then $$\alpha (qAq)= Alg(\cl L_1|_p(H)) .$$ Theorem 2.7 in \cite{ele3} 
implies that there exists a reflexive lattice $\cl L_2$ and a $w^*$-continuous completely isometric homomorphism 
$\beta : B\rightarrow Alg(\cl L_2) $ such that $$ Alg(\cl L_2) \sim _{TRO} Alg(\cl L_1|_p(H)) .$$ Thus, by Theorem 3.3 in 
\cite{ele1} there exists a $*$-isomorphism $\rho : (\cl L_1|_{p(H)})^{\prime \prime }\rightarrow \cl L_2^{\prime \prime } $ 
such that $\rho (\cl L_1|_{p(H)})=\cl L_2.$ If $\tau : \cl L_1^{\prime \prime }\rightarrow \cl L_1^{\prime \prime }|_{p(H)} :
 \;x\rightarrow x|_{p(H)}, $ then we write $\theta =\rho \circ \tau .$ This is the required map. \end{proof} 

\begin{theorem}\label{9}
 Let $A, B, C$ be unital dual operator algebras such that $$C\subset _{\Delta }B, \;\;\;B\subset _{\Delta }A.$$ 
Then, $C\subset _{\Delta }A.$ \end{theorem}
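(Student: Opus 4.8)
The plan is to reduce Theorem \ref{9} to the transitivity of $\subset_{TRO}$ already proved in Proposition \ref{7a}; the only genuine work is to realise both $\Delta$-embeddings spatially inside a \emph{single} representation of the middle algebra $B$. First I would unpack the definitions. By $B\subset_\Delta A$ and Definition \ref{77} there are $w^*$-continuous completely isometric homomorphisms $\alpha:A\rightarrow \tilde A$ and $\beta:B\rightarrow \tilde B$ onto $w^*$-closed algebras $\tilde A\subseteq B(H)$, $\tilde B\subseteq B(K)$ with $\tilde B\subset_{TRO}\tilde A$. Likewise $C\subset_\Delta B$ furnishes $w^*$-continuous completely isometric homomorphisms $\hat\beta:B\rightarrow \hat B$ and $\gamma:C\rightarrow \hat C$ with $\hat C\subset_{TRO}\hat B$. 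The obstacle is precisely that the two representations $\tilde B$ and $\hat B$ of $B$ need not coincide, while $\subset_{TRO}$ is a spatial relation.

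Second, the transport step. Write $\hat C\sim_{TRO}\hat e\hat B\hat e$ for a projection $\hat e\in\hat B$, and put $e=(\beta\circ\hat\beta^{-1})(\hat e)$, a projection in $\tilde B$. The map $\beta\circ\hat\beta^{-1}$ restricts to a $w^*$-continuous completely isometric isomorphism of $\hat e\hat B\hat e$ onto $e\tilde B e$, and since isomorphic dual operator algebras are trivially $\Delta$-equivalent (via the identity TRO), we get $\hat C\sim_\Delta \hat e\hat B\hat e\sim_\Delta e\tilde B e$, whence $C\sim_\Delta e\tilde B e$. Invoking the representation results of \cite{ele3} (the very ones used in the proof of Theorem \ref{8}, with the fixed side now the concrete corner $e\tilde B e$ acting on $e(K)$), I obtain a $w^*$-continuous completely isometric representation $\tilde\gamma:C\rightarrow \tilde C$ with $\tilde C\sim_{TRO}e\tilde B e$. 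Because $e\in\tilde B$, this says exactly $\tilde C\subset_{TRO}\tilde B$, so both embeddings now live over the single representation $\tilde B$.

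Finally, I would apply Proposition \ref{7a} to the chain $\tilde C\subset_{TRO}\tilde B\subset_{TRO}\tilde A$ to conclude $\tilde C\subset_{TRO}\tilde A$. Since $\tilde\gamma$ and $\alpha$ are $w^*$-continuous completely isometric isomorphisms of $C$ onto $\tilde C$ and of $A$ onto $\tilde A$, Definition \ref{77} then gives $C\subset_\Delta A$, as required. I expect the transport step to be the crux: everything hinges on re-realising the completely isometric isomorphism $\hat e\hat B\hat e\cong e\tilde B e$ of corners as a \emph{spatial} TRO-embedding into the pre-chosen $\tilde B$, which is exactly what the flexibility of weak $\Delta$-equivalence (Theorem \ref{4} together with \cite{ele3}) permits. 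Once the two representations of $B$ are aligned, the conclusion is immediate from Proposition \ref{7a}.
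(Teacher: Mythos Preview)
Your proposal is correct and follows essentially the same route as the paper: the paper likewise transports the $C$-side TRO data to the representation $\beta$ of $B$ via Theorem 2.7 of \cite{ele3}, obtaining $\gamma(C)\sim_{TRO}\beta(p)\beta(B)\beta(p)$, and then invokes Proposition \ref{7a} (together with $qAq\subset_{TRO}A$) to conclude $\gamma(C)\subset_{TRO}A$. Your identification of the ``transport step'' as the crux, and your use of \cite{ele3} to realise $C\sim_\Delta e\tilde Be$ spatially against the fixed corner $e\tilde Be$, matches the paper's argument exactly.
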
 
\begin{proof}We may assume that there exist projections $p\in B, q\in A$ such that 
$$C\sim _{TRO}pBp, \;\;\;\beta (B)\sim _{TRO}qAq,$$ where $\beta : B\rightarrow \beta (B)$ is a $w^*$-continuous completely 
isometric homomorphism. From Theorem 2.7 in \cite{ele3}, we know that for the representation $\beta|_{pBp} : pBp\rightarrow 
\beta (p)\beta (B)\beta(p) $ there exists a $w^*$-continuous completely isometric homomorphism $\gamma: C\rightarrow \gamma(C) $
 such that $$\gamma (C)\sim _{TRO}\beta (p)\beta (B)\beta(p) .$$ By Proposition \ref{7a}, we have that $$\beta (p)\beta (B)\beta(p)
 \subset _{TRO}qAq.$$ Because $qAq\subset _{TRO}A$, we have that $\gamma (C)\subset _{TRO}A,$ and thus $C\subset _{\Delta }A.$ 
\end{proof}

 \begin{remark}\label{Y} In view of Theorem \ref{9}, one should expect  that weak $\Delta $-embedding is a partial 
order relation in the class of unital dual operator algebras if we identify those unital dual operator algebras that are weakly 
$\Delta $-equivalent. Thus, one should expect that $A\subset _\Delta B,\;\;B\subset _\Delta A\Rightarrow A\sim _\Delta B.$ 
In Section 1.4 we shall see that this is not true. However, in the case of von Neumann algebras, this is indeed true. 
For further details, see Section 2.3 below. \end{remark} 

\begin{example}\label{b} Let $A=\Alg{\cl N_1}, B=\Alg{\cl N_2},$ 
where $\cl N_1$ is a continuous nest, and $\cl N_2$ is a nest with at least one atom. We shall prove that it is impossible 
that $B\subset _\Delta A.$\end{example}
 \begin{proof} Suppose on the contrary that $B\subset _\Delta A.$ Thus, there exists a 
projection $p\in \Delta (A)$ such that $B\sim _\Delta pA|_{p}.$ Because $B$ and $pA|_p$ are nest algebras, it follows from 
Theorem 3.2 in \cite{ele3} that $B\sim _{TRO}pA|_p.$ Thus, by Theorem 3.3 in \cite{ele1} there exists a homeomorphism 
$\theta : \cl N_2\rightarrow \cl N_1|_p.$ This is impossible, because $\cl N_2$ contains an atom, and $\cl N_1|_p$ is a 
continuous nest. \end{proof} 

\subsection{The case of von Neumann algebras} 
\bigskip 

\begin{lemma}\label{sos} Let $A$ be a 
von Neumann algebra, and let $p,q$ be central projections of $A$ such that $p\leq q$ and $A\sim _{TRO}Ap.$ Then, 
$A\sim _{TRO}Aq.$ \end{lemma}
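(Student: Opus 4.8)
The plan is to exploit the fact that the TRO-equivalence $A\sim_{TRO}Ap$ induces a $*$-isomorphism of centres $\phi:Z(A)\to Z(Ap)=Z(A)p$ which is spatially compatible with the implementing TRO, in the sense that $\phi(1)=p$ and, for every central projection $z\in Z(A)$, reducing the equivalence by $z$ yields $Az\sim_{TRO}A\phi(z)$. This transport of central projections is the content of the linking-algebra / bimodule picture behind Proposition 2.8 in \cite{ele1}, where the extra relations $A=[M^*M]^{-w^*}$ and $Ap=[MM^*]^{-w^*}$ are available; pinning down $\phi$ together with the reduction property is the step I expect to require the most care, and it is the only genuine input beyond bookkeeping.

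Granting this, I would first iterate $\phi$ to produce decreasing central projections $p_n=\phi^n(1)$ (so $p_0=1$, $p_1=p$, $p_{n+1}\le p_n$) together with $q_n=\phi^n(q)$. Since $p=p_1\le q\le 1=p_0$ and $\phi$ is an order-preserving $*$-homomorphism, applying $\phi^n$ gives $p_{n+1}\le q_n\le p_n$ for all $n$, so both sequences share the infimum $p_\infty=\bigwedge_n p_n=\bigwedge_n q_n$. Setting $a_n=A(p_n-q_n)$ and $b_n=A(q_n-p_{n+1})$, the two nested chains chop $A$ and $Aq$ into mutually orthogonal central summands:
\[A=Ap_\infty\oplus\bigoplus_{n\ge0}a_n\oplus\bigoplus_{n\ge0}b_n,\qquad Aq=Ap_\infty\oplus\bigoplus_{n\ge1}a_n\oplus\bigoplus_{n\ge0}b_n.\]
Thus $A$ and $Aq$ differ only in that $A$ carries $\bigoplus_{n\ge0}a_n$ while $Aq$ carries the shifted tail $\bigoplus_{n\ge1}a_n$.

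Next I would record the one equivalence that closes the gap. Because the restricted equivalence $Ap_n\sim_{TRO}Ap_{n+1}$ has centre isomorphism $\phi|_{Z(A)p_n}$ and hence carries $p_n-q_n$ to $p_{n+1}-q_{n+1}$, reducing it by $p_n-q_n$ gives $a_n\sim_{TRO}a_{n+1}$ for every $n$, implemented by a TRO $M_n$. Since an orthogonal direct sum of TROs is again a TRO implementing the direct sum of the corresponding equivalences, the shift $\bigoplus_{n\ge0}M_n$ yields $\bigoplus_{n\ge0}a_n\sim_{TRO}\bigoplus_{n\ge1}a_n$. Combining this with the identity TROs on the common summands $Ap_\infty$ and $\bigoplus_{n\ge0}b_n$, and taking the orthogonal direct sum of all these TROs, produces a single TRO witnessing $A\sim_{TRO}Aq$, as required.

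The essential mechanism is therefore a telescoping Eilenberg-swindle along the $\phi$-orbit of $q$: the equivalences $a_n\sim_{TRO}a_{n+1}$ let one absorb the extra summand $a_0$ of $A$ into the infinite tail. The routine ingredients are that TRO-equivalence is stable under orthogonal direct sums and that reducing a von Neumann TRO-equivalence by a central projection again gives a TRO-equivalence; the substantive point, and the main obstacle, is establishing the centre isomorphism $\phi$ and the relations $Az\sim_{TRO}A\phi(z)$ directly from the TRO relations supplied by Proposition 2.8 in \cite{ele1}.
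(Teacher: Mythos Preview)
Your argument is correct and uses the same Eilenberg--swindle decomposition as the paper: interleave the iterates $1,q,p,\phi(q),\phi(p),\dots$, split off the common tail and the ``$b$-blocks'', and shift the ``$a$-blocks'' by one step. In the paper's notation $e_{2n}=\phi^n(1)=p_n$ and $e_{2n+1}=\phi^n(q)=q_n$, so your $a_n$ and $b_n$ are exactly the summands cut out by $e_{2n}-e_{2n+1}$ and $e_{2n+1}-e_{2n+2}$.

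The one genuine difference is the level at which the swindle is performed. You work directly with TRO-equivalences of central summands of $A$, and therefore must first extract from the implementing TRO $M$ the centre isomorphism $\phi:Z(A)\to Z(A)p$ together with the reduction property $Az\sim_{TRO}A\phi(z)$ (your declared ``main obstacle''), and then assemble an infinite direct sum of TROs. The paper sidesteps all of this by invoking Theorem~3.3 of \cite{ele1} at the outset: since $A$ is self-adjoint, $A\sim_{TRO}Ap$ is equivalent to the existence of a $*$-isomorphism $\theta:A'\to A'p$, and the swindle is then carried out purely algebraically inside $A'$, producing an explicit $*$-isomorphism $\rho:A'\to A'q$ (apply $\theta$ on the even blocks, the identity on the odd blocks and on $\bigwedge_n e_n$); a second appeal to Theorem~3.3 gives $A\sim_{TRO}Aq$. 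This makes the ``substantive point'' you flag essentially free, at the cost of passing through commutants twice. Your route is self-contained at the TRO level and makes the bimodule picture more visible, but is longer to execute; the paper's route is shorter because the translation TRO $\leftrightarrow$ commutant-isomorphism is already available.
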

 \begin{proof} By Theorem 3.3 in \cite{ele1}, there exists a $*$-isomorphism $\theta : 
A^\prime \rightarrow A^\prime p.$ We need to prove that there exists a $*$-isomorphism $\rho : A^\prime \rightarrow A^\prime q.$ 
Suppose that $$e_0=Id_A, e_1=q, e_2=p, e_n=\theta (e_{n-2}), n=2,3,...$$ Clearly $(e_n)_n$ is a decreasing sequence of central 
projections. Observe that $$e_0=\left(\sum_{n=0}^\infty \oplus (e_{2n}-e_{2n+1})\oplus (e_{2n+1}-e_{2n+2})\right)\oplus \wedge _ne_n.$$
 Thus, the map $\rho : A^\prime \rightarrow A^\prime q$ sending $$a=\left(\sum_{n=0}^\infty \oplus a(e_{2n}-e_{2n+1})\oplus a
(e_{2n+1}-e_{2n+2})\right)\oplus (a\wedge _ne_n)$$ to $$\rho (a)=\left(\sum_{n=0}^\infty \oplus \theta (a)(e_{2n+2}-e_{2n+3})
\oplus a(e_{2n+1}-e_{2n+2})\right)\oplus (a\wedge _ne_n)$$ is a $*$-isomorphism . \end{proof} 

The above Lemma is based on the 
fact if $p, q$ are central projections of the von Neumann algebra $A$ such that $p\leq q$ and $A\cong Ap$, then $A\cong Aq,$ 
where $\cong $ is the $*$-isomorphism. We acknowledge that this was known to the authors of \cite{zar} (see the proof of Lemma 6.2.3).
 In this Lemma, an alternative proof to ours was provided. 

\begin{theorem}\label{xxxx} The relation $\subset _{TRO}$ is a partial 
order relation for von Neumann algebras, if we identify those von Neumann algebras that are TRO-equivalent. \end{theorem}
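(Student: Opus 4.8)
The relation $\subset _{TRO}$ is reflexive, since $A=1\cdot A\cdot 1\sim _{TRO}A$, and transitive by Proposition \ref{7a}. Moreover $A\sim _{TRO}B$ already forces $A\subset _{TRO}B$ and $B\subset _{TRO}A$ (take the unit as the compressing projection), so the relation descends to a well-defined relation on $\sim _{TRO}$-equivalence classes. Hence the only substantive point is antisymmetry: given von Neumann algebras $A,B$ with $A\subset _{TRO}B$ and $B\subset _{TRO}A$, the plan is to deduce $A\sim _{TRO}B$.

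First I would reduce to central compressions. By Definition \ref{5} there are projections $p_0\in B$ and $q_0\in A$ with $A\sim _{TRO}p_0Bp_0$ and $B\sim _{TRO}q_0Aq_0$. Since $B$ is a von Neumann algebra we have $\Delta (B)=B$, so Lemma \ref{**} replaces $p_0$ by its central support $p\in B$ with $p_0Bp_0\sim _{TRO}pBp=Bp$; by transitivity of $\sim _{TRO}$ this gives $A\sim _{TRO}Bp$ with $p$ central. Symmetrically I obtain a central projection $q\in A$ with $B\sim _{TRO}Aq$.

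The heart of the argument is to transport the central projection $p$ across the equivalence $B\sim _{TRO}Aq$. A TRO-equivalence of von Neumann algebras induces a lattice isomorphism between the central projections of $B$ and those of $Aq$, under which corresponding central corners remain TRO-equivalent; this can be extracted from Theorem 3.3 in \cite{ele1} (the isomorphism of commutants, hence of centres) together with the compatibility of the implementing TRO with central compressions. Using it, I would produce a central projection $r\in A$ with $r\leq q$ and $Bp\sim _{TRO}(Aq)r=Ar$. Chaining with the previous step yields $A\sim _{TRO}Bp\sim _{TRO}Ar$, that is, $A\sim _{TRO}Ar$ with $r\leq q$ central in $A$. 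Finally I would invoke Lemma \ref{sos}, whose hypotheses $r\leq q$ and $A\sim _{TRO}Ar$ are exactly met, to conclude $A\sim _{TRO}Aq$; since $Aq\sim _{TRO}B$ by construction, transitivity of $\sim _{TRO}$ gives $A\sim _{TRO}B$.

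I expect the central-projection transport of the third paragraph to be the main obstacle: it is the step where TRO-equivalence must be shown to respect compression by central projections and, crucially, to deliver the comparison $r\leq q$ that feeds Lemma \ref{sos}. Once that correspondence is in hand, the remainder is a bookkeeping chain linking Lemmas \ref{**} and \ref{sos} with Proposition \ref{7a}.
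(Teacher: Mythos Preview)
Your proposal is correct and follows essentially the same route as the paper. The only difference is one of packaging: where you speak of ``transporting the central projection $p$ across the TRO-equivalence $B\sim_{TRO}Aq$'' to obtain a central $r\leq q$ with $Bp\sim_{TRO}Ar$, the paper carries this out directly at the level of commutants---it invokes Theorem~3.3 of \cite{ele1} to produce $*$-isomorphisms $\theta:A'\to B'q$ and $\rho:B'\to A'p$ (in the paper's labelling of $p,q$), observes that $\rho(B'q)=A'\hat p$ for a central $\hat p\leq p$, and composes $\rho\circ\theta$ to get $A'\cong A'\hat p$ in one stroke, then applies Lemma~\ref{sos}. Your ``main obstacle'' thus dissolves once you pass to commutants: the image of a central projection under a $*$-isomorphism of von Neumann algebras is automatically a central projection dominated by the unit of the target, which is exactly the comparison $r\leq q$ you need.
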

 \begin{proof}
 Let $A, B$ be von Neumann algebras. It suffices to prove the implication that $$A\subset _{TRO}B, \;\;\;B\subset _
{TRO}A \Rightarrow A\sim_{TRO} B. $$ Let $q_0\in B, p_0\in A$ be projections such that $$A\sim _{TRO}q_0Bq_0, \;\;B\sim _
{TRO}p_0Ap_0.$$ By Lemma \ref{**}, there exist central projections $q\in B, p\in A$ such that $$A\sim _{TRO}Bq, \;\;B\sim _{TRO}Ap.$$
 Thus, there exist $*$-isomorphisms $$\theta: A^\prime \rightarrow B^\prime q, \;\;\; \rho: B^\prime \rightarrow A^\prime p. $$ 
We can easily see that there exists a central projection $\hat p\in A$ such that $\hat p\leq p$ and $$\rho (B^\prime q)=A^\prime \hat p.$$ 
Therefore, we obtain a $*$-isomorphism from $A^\prime $ onto $A^\prime \hat p.$ Because $\hat p\leq p$ and $p, \hat p$ are central, Lemma 
\ref{sos} implies that there exists a $*$-isomorphism from $A^\prime $ onto $A^\prime p.$ Thus, $A\sim _{TRO}Ap,$ which implies that 
$A\sim _{TRO}B.$ \end{proof} 

\begin{theorem}\label{999} Let $A, B$ be von Neumann algebras. Then, the following are equivalent: 

(i) There exist $*$-isomorphisms $\alpha: A\rightarrow \alpha (A), \beta: B\rightarrow \beta (B) $ and a $w^*$-continuous onto 
$*$-homomorphism $\theta : \alpha (A)^\prime \rightarrow \beta (B)^\prime.$ 

(ii) $B\subset _\Delta A.$ 

(iii) There exists a 
cardinal $I$ and a $w^*$-continuous onto $*$-homomorphism $\rho : M_I(A)\rightarrow M_I(B).$ \end{theorem}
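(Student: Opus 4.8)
The plan is to derive this von Neumann algebra version from Theorem \ref{8}, together with Lemma \ref{**} and the structure of $w^*$-continuous $*$-homomorphisms between von Neumann algebras. The guiding observation is that for a von Neumann algebra $A$ one has $\Delta(A)=A$, so $\Delta(A)'=A'$, and every faithful normal representation of $A$ has self-adjoint, hence reflexive, range whose associated reflexive lattice can be taken to be the full projection lattice of the commutant (for which $\Alg{\cl{L}}$ equals the bicommutant). I would prove the two equivalences $(ii)\Leftrightarrow(iii)$ and $(i)\Leftrightarrow(ii)$ separately.

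For $(ii)\Leftrightarrow(iii)$ I would first pass to central projections. If $B\subset_\Delta A$, then $B\sim_\Delta qAq$ for some projection $q$; by Lemma \ref{**} the central support $p$ of $q$ is central in $A$ and $qAq\sim_{TRO}Ap$, so $B\sim_\Delta Ap$. Theorems \ref{1} and \ref{4} (whose completely isometric $w^*$-homeomorphism becomes a $*$-isomorphism in the self-adjoint setting) then give a cardinal $I$ with $M_I(B)\cong M_I(Ap)$. Since $p$ is central in $A$, the projection $p\otimes 1$ is central in $M_I(A)\cong A\bar\otimes B(l^2(I))$ and $M_I(Ap)=M_I(A)(p\otimes 1)$, so $x\mapsto x(p\otimes 1)$ is a $w^*$-continuous onto $*$-homomorphism $M_I(A)\to M_I(Ap)\cong M_I(B)$, which is $(iii)$. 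Conversely, given an onto $w^*$-continuous $*$-homomorphism $\rho:M_I(A)\to M_I(B)$, its kernel is a $w^*$-closed two-sided ideal, hence of the form $M_I(A)(1-z)$ for a central projection $z$; since the centre of $A\bar\otimes B(l^2(I))$ is $Z(A)\otimes 1$, we have $z=z_0\otimes 1$ with $z_0\in Z(A)$ a projection, so $M_I(B)\cong M_I(A)z=M_I(z_0Az_0)$. Theorems \ref{1} and \ref{4} then give $B\sim_\Delta z_0Az_0$, and as $z_0Az_0$ is a corner of $A$ this yields $B\subset_\Delta A$.

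For $(i)\Leftrightarrow(ii)$ I would invoke Theorem \ref{8}. Its condition $(i)$ supplies reflexive lattices $\cl{L}_1,\cl{L}_2$, completely isometric onto homomorphisms $\alpha,\beta$, and an onto $w^*$-continuous $*$-homomorphism $\theta:\cl{L}_1''=\Delta(A)'\to\cl{L}_2''=\Delta(B)'$ with $\theta(\cl{L}_1)=\cl{L}_2$. In the self-adjoint case this collapses to condition $(i)$ of the present theorem: a $w^*$-continuous completely isometric homomorphism of a von Neumann algebra is automatically a $*$-isomorphism onto a von Neumann algebra, so $\alpha,\beta$ are $*$-isomorphisms and $\cl{L}_1''=\alpha(A)'$, $\cl{L}_2''=\beta(B)'$; moreover one may take $\cl{L}_1,\cl{L}_2$ to be the full projection lattices of $\alpha(A)'$ and $\beta(B)'$, and for an onto $w^*$-continuous $*$-homomorphism $\theta$ of von Neumann algebras the condition $\theta(\cl{L}_1)=\cl{L}_2$ is automatic (such a $\theta$ restricts to a $*$-isomorphism on $\alpha(A)'z$ for a central projection $z$, hence carries projections onto projections). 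Thus Theorem \ref{8}$(i)$ is equivalent to $(i)$ here, and Theorem \ref{8}$(i)\Leftrightarrow(ii)$ gives $(i)\Leftrightarrow(ii)$.

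The main obstacle is not any single hard computation but the two structural upgrades peculiar to the self-adjoint setting. The more delicate one is the kernel analysis in $(iii)\Rightarrow(ii)$: identifying the kernel of a $w^*$-continuous onto $*$-homomorphism of $M_I(A)$ with a central projection and, crucially, recognising via the identity $Z(A\bar\otimes B(l^2(I)))=Z(A)\otimes 1$ that this central projection descends from $Z(A)$, so that the stable ``quotient'' statement $(iii)$ really corresponds to a ``corner'' $z_0Az_0$ and hence to the embedding $(ii)$. The secondary point is checking that the general reflexive and completely isometric data of Theorem \ref{8} genuinely reduces to $*$-isomorphisms and commutant data, and that $\theta(\cl{L}_1)=\cl{L}_2$ is equivalent to plain surjectivity of $\theta$; once these are in place, everything follows by transcription of Theorems \ref{1}, \ref{4}, \ref{8} and Lemma \ref{**}.
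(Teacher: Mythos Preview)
Your proposal is correct and follows essentially the same strategy as the paper: reduce $(i)\Leftrightarrow(ii)$ to Theorem~\ref{8}, and handle $(iii)\Rightarrow(ii)$ via the kernel of $\rho$, using $Z(M_I(A))=Z(A)^I$ to recognise the central projection as coming from $Z(A)$. The one minor difference is in $(ii)\Rightarrow(iii)$: the paper passes first through condition $(i)$ (obtaining $\theta:A'\to B'$, finding the central $p$ with $\ker\theta=A'p^\bot$, and then invoking Theorem~\ref{1} for $Ap$ and $B$), whereas you stay on the algebra side and use Lemma~\ref{**} to replace the corner $qAq$ by $Ap$ with $p$ central before applying Theorems~\ref{1}/\ref{4}. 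Both routes are short and lead to the same central cut $Ap$; your version has the small advantage of not detouring through commutants, while the paper's version makes the parallel with $(iii)\Rightarrow(ii)$ (both are kernel computations) more visible.
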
 
\begin{proof}The 
equivalence of (i) and (ii) is a consequence of Theorem \ref{8}. 
$$(ii)\Rightarrow (iii)$$
 Suppose that $B\subset _\Delta A.$ 
By Theorem \ref{8}, we may assume that there exists a $w^*$-continuous onto $*$-homomorphism $\theta : A^\prime \rightarrow B^\prime.$ 
Suppose that $A^\prime p^\bot =Ker \theta $ for a projection $p$ in the centre of $A.$ We also assume that $A\subseteq B(H).$ Then, the map 
$$A^\prime|{p(H)}\rightarrow B^\prime: a|_{p(H)}\rightarrow \theta (a)$$ is a $*$-isomorphism. Because
 $$(A|_{p(H)})^\prime=A^\prime |_{p(H)},$$ Theorem \ref{1} implies that there exists a cardinal $I$ and a 
$w^*$-continuous onto $*$-isomorphism $$M_I(pAp)\rightarrow M_I(B).$$ 

$$(iii)\Rightarrow (ii)$$ 

Suppose that 
$\rho : M_I(A)\rightarrow M_I(B)$ is a $w^*$-continuous onto $*$-homomorphism. Let $p$ be a projection in the centre 
of $M_I(A)$ such that $$M_I(A)p^\bot =Ker \rho .$$ Because $Z(M_I(A))=Z(A)^I,$ where $Z(A)$ (resp. $Z(M_I(A))$) is the 
centre of $A$ (resp. $M_I(A)$), we may assume that $p=q^I$ for $q\in Z(A).$ Thus, the map $$M_I(Aq)\rightarrow M_I(B):
 (a_{i,j}q)\rightarrow \rho ((a_{i,j}))$$ is a $*$-isomorphism. Then, Theorem \ref{8} implies that $B\subset _\Delta A.$ 
\end{proof}

 \begin{theorem}\label{YY} The  weak $\Delta $-embedding is a partial order relation in the class of von Neumann
 algebras, if we identify those von Neumann algebras that are weakly Morita equivalent in the sense of Rieffel. 
\end{theorem}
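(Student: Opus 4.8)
The plan is to reduce Theorem \ref{YY} to the already-established Theorem \ref{xxxx}, since the two differ only in passing from the relation $\sim_{TRO}$ to the relation $\sim_\Delta$, and from von Neumann algebras acting concretely to abstract von Neumann algebras considered up to $*$-isomorphism. By Theorem \ref{9} we already know that $\subset_\Delta$ is transitive on the class of unital dual operator algebras, and reflexivity is immediate from Remark \ref{66} (a von Neumann algebra $A$ satisfies $A \subset_\Delta A$ by taking $p$ to be the unit). Hence the only substantive content is antisymmetry: I must show that for von Neumann algebras $A$ and $B$, the hypotheses $A \subset_\Delta B$ and $B \subset_\Delta A$ together force $A \sim_\Delta B$.

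First I would unwind the definition of $\subset_\Delta$ via Definition \ref{77}. Assuming $A \subset_\Delta B$ and $B \subset_\Delta A$, there exist $w^*$-continuous completely isometric homomorphisms realizing each relation; but since $A$ and $B$ are von Neumann algebras, such completely isometric homomorphisms are automatically $*$-isomorphisms onto $w^*$-closed $*$-subalgebras (a completely isometric isomorphism between $C^*$-algebras is a $*$-isomorphism). Thus I may replace $A$ and $B$ by $*$-isomorphic copies acting concretely on Hilbert spaces, after which the relations $A \subset_\Delta B$ and $B \subset_\Delta A$ become, by the $\subset_{TRO}$ formulation inside Definition \ref{77}, precisely $A \subset_{TRO} B$ and $B \subset_{TRO} A$ for suitable concrete representations. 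The point is that for self-adjoint algebras the passage through abstract $\Delta$-equivalence collapses onto concrete $TRO$-equivalence.

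Once both relations are rewritten as $A \subset_{TRO} B$ and $B \subset_{TRO} A$, I simply invoke Theorem \ref{xxxx}, which states that $\subset_{TRO}$ is a partial order on von Neumann algebras modulo $TRO$-equivalence; antisymmetry there gives $A \sim_{TRO} B$. Finally, $A \sim_{TRO} B$ implies $A \sim_\Delta B$ directly from Definition \ref{3}, since $\sim_{TRO}$ for concrete representations is exactly the relation witnessing $\sim_\Delta$ (take $\alpha$ and $\beta$ to be the chosen $*$-isomorphic representations). Combined with the recorded fact (from Theorem \ref{999} and Theorem \ref{1}) that $\sim_\Delta$ for von Neumann algebras coincides with weak Morita equivalence in the sense of Rieffel, this identifies the equivalence classes correctly and completes the verification that $\subset_\Delta$ is a partial order.

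The step I expect to require the most care is the reduction that converts the abstract statements $A \subset_\Delta B$, $B \subset_\Delta A$ into concrete $TRO$-embeddings on a common footing, because the two relations are a priori witnessed by different pairs of representations on different Hilbert spaces. The subtlety is to align the representation $\alpha$ used to realize $A \subset_\Delta B$ with the one used for $B \subset_\Delta A$, so that Theorem \ref{xxxx} can be applied to a single concrete pair; this is where one leans on the representation-independence of $\sim_{TRO}$ for von Neumann algebras (guaranteed by Theorem \ref{1} and the $*$-isomorphism invariance of the commutant data in Theorem \ref{xxxx}) to move freely between the witnessing representations. Once that alignment is in place, the remaining deductions are formal.
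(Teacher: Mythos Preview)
Your plan to reduce to Theorem \ref{xxxx} has a real gap at exactly the place you flagged as delicate, and the gap is not just a matter of care. To invoke Theorem \ref{xxxx} you need a \emph{single} pair of concrete representations $\alpha(A)$ and $\beta(B)$ for which both $\alpha(A)\subset_{TRO}\beta(B)$ and $\beta(B)\subset_{TRO}\alpha(A)$ hold simultaneously. The hypotheses $A\subset_\Delta B$ and $B\subset_\Delta A$ only hand you two \emph{different} pairs of representations, one witnessing each direction. Your proposed fix appeals to ``representation-independence of $\sim_{TRO}$,'' but $\sim_{TRO}$ is emphatically representation-dependent: two $*$-isomorphic von Neumann algebras can easily fail to be TRO-equivalent in given representations (by Theorem~3.3 of \cite{ele1} this would force their commutants to be $*$-isomorphic, which need not happen). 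What \emph{is} representation-independent is $\sim_\Delta$, and that distinction is precisely the obstacle. If you try to align the representations using Theorem~2.7 of \cite{ele3}, you can arrange $\beta(B)\sim_{TRO}qAq$ and then $\gamma(A)\sim_{TRO}\beta(p)\beta(B)\beta(p)$, but $\gamma$ is a \emph{new} representation of $A$, and you end up with $\gamma(A)\subset_{TRO}A$ rather than a symmetric pair to which Theorem \ref{xxxx} applies.

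The paper does not reduce to Theorem \ref{xxxx} at all. Instead, following the chain above, it extracts a projection $r\leq q$ in $A$ with $A\sim_\Delta rAr$, and then proves a separate Cantor--Schr\"oder--Bernstein style \emph{Claim}: if $A\sim_\Delta rAr$ and $r\leq q$ then $A\sim_\Delta qAq$. This Claim is established at the level of stable isomorphism using Lemma~6.2.3 of \cite{zar}, and it is the genuine extra ingredient beyond Theorem \ref{xxxx}. Once $A\sim_\Delta qAq$ is known, the conclusion $A\sim_\Delta B$ follows from $qAq\sim_\Delta B$. So the missing idea in your outline is this intermediate Claim; without it (or an equivalent device) the alignment you need cannot be achieved.
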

 \begin{proof} \textbf{Claim:} Let $A$ be a von Neumann algebra, and let $r\in A$ be a projection such 
that $A\sim _\Delta rAr.$ If $q$ is a projection in $A$ such that $r\leq q$, then it also holds that $A\sim _\Delta qAq.$
 
\textbf{Proof of the claim:} There exists a cardinal $I$ such that the algebras $M_I(A)$ and $M_I(rAr)$ are $*$- isomorphic.
 We suppose that $M=M_I(A), \;\;\;N=M_I(qAq).$ Then, we have that $M\cong M_I(rAr)=r^INr^I$ and $N\cong q^IMq^I.$ 
By Lemma 6.2.3 in \cite{zar}, the von Neumann algebras $M$ and $N$ are stably isomorphic. Thus, $M\sim _\Delta N.$ 
However, $$A\sim _{TRO}M, \;\;\;qAq\sim _{TRO}N.$$ Therefore, $A\sim _\Delta qAq,$ and the proof of the claim is complete.

 To prove the theorem, it suffices to prove that if $A, B$ are von Neumann algebras such that $A\subset _\Delta B, 
\;\;B\subset _\Delta A$, then $A\sim _\Delta B.$ We may assume that there exist projections $p\in B, q\in A$ and 
$w^*$-continuous completely isometric homomorphisms $\alpha: A\rightarrow \alpha (A), \beta: B\rightarrow \beta(B) ,$ 
such that $$\alpha (A)\sim _{TRO}pBp, \;\;\;\beta (B)\sim _{TRO}qAq.$$ For the representation $\beta|_{pBp} : 
pBp\rightarrow \beta(p) \beta (B)\beta (p),$ there exists a $w^*$-continuous one-to-one $*$-homomorphism $\gamma :
\alpha (A)\rightarrow \gamma (\alpha(A)) $ such that $$\gamma (\alpha(A)) \sim _{TRO}\beta (p)\beta (B)\beta (p).$$
 By Proposition \ref{7a}, we have that $$\beta (p)\beta (B)\beta (p)\subset _{TRO}qAq.$$ Therefore, there exists a
 projection $r\leq q$ such that $$\gamma (\alpha (A))\sim _{TRO}rAr\Rightarrow A\sim _\Delta rAr.$$ The claim implies 
that $A\sim _\Delta qAq.$ However, $qAq\sim _\Delta B.$ Thus $A\sim _\Delta B.$ Thus, the proof is complete. \end{proof}

\begin{corollary} Let $A, B$ be von Neumann algebras, $I, J$ be cardinals, and $$\theta: M_I(A)\rightarrow M_I(B), \;\;\; 
\rho : M_J(B)\rightarrow M_J(A)$$ be onto $w^*$-continuous homomorphisms. Then, $A\sim _\Delta B.$ \end{corollary}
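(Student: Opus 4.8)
The plan is to deduce the two one-sided embeddings $A\subset_\Delta B$ and $B\subset_\Delta A$ directly from Theorem \ref{999}, and then to invoke the antisymmetry part of Theorem \ref{YY}. The point is that Theorem \ref{999} already packages exactly the equivalence we need: its condition (ii), namely $B\subset_\Delta A$, is equivalent to its condition (iii), the existence of a cardinal $I$ together with a $w^*$-continuous onto $*$-homomorphism $M_I(A)\to M_I(B)$. So no new estimate is required; the work is purely to match the given data to the hypotheses of that theorem and then apply the established partial-order structure.

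First I would apply Theorem \ref{999} to the pair $(A,B)$. The surjective $w^*$-continuous homomorphism $\theta : M_I(A)\to M_I(B)$ is precisely condition (iii) of that theorem, so it yields $B\subset_\Delta A$. Next I would apply the same theorem to the pair $(B,A)$: the map $\rho : M_J(B)\to M_J(A)$ is condition (iii) with the roles of $A$ and $B$ interchanged, and hence gives $A\subset_\Delta B$.

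Having obtained both $A\subset_\Delta B$ and $B\subset_\Delta A$, I would conclude by quoting Theorem \ref{YY}, which asserts that $\subset_\Delta$ is a partial order on the class of von Neumann algebras once we identify algebras that are weakly $\Delta$-equivalent. Its antisymmetry is exactly the implication that $A\subset_\Delta B$ together with $B\subset_\Delta A$ forces $A\sim_\Delta B$, which is the desired conclusion. Thus the corollary is a two-step assembly of the preceding results.

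The only point demanding attention, and hence the main obstacle, is the compatibility of hypotheses: the surjections in the statement are called \emph{homomorphisms}, whereas Theorem \ref{999}(iii) is phrased with $*$-homomorphisms. In the von Neumann setting this is essentially a harmless discrepancy, but to be fully rigorous I would verify that a $w^*$-continuous surjective homomorphism between von Neumann algebras may be taken to respect the involution, or equivalently reduce the argument to the $*$-homomorphic case. This normalisation is the only place where any genuine verification, as opposed to bookkeeping, could be hidden; everything else is a direct citation of Theorems \ref{999} and \ref{YY}.
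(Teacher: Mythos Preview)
Your proposal is correct and follows exactly the same route as the paper's proof: apply Theorem \ref{999} twice to obtain both inclusions $B\subset_\Delta A$ and $A\subset_\Delta B$, then invoke the antisymmetry in Theorem \ref{YY}. Your caveat about ``homomorphism'' versus ``$*$-homomorphism'' is well-spotted; the paper's own one-line proof silently reads the hypothesis as $*$-homomorphisms, in keeping with the standard convention for maps between von Neumann algebras.
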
 
\begin{proof} By Theorem \ref{999}, $A\subset _\Delta B$ and $B\subset _\Delta A.$ The conclusion then follows 
from the above theorem. \end{proof} 

\begin{corollary} Let $A, B$ be unital dual operator algebras such that 
$A\subset _\Delta B$ and $B\subset _\Delta A.$ Then, $\Delta (A)\sim _\Delta \Delta (B).$ \end{corollary}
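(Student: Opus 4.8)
The plan is to transport both hypotheses down to the diagonals, where $\Delta(A)$ and $\Delta(B)$ are von Neumann algebras, and then to invoke the partial order property established in Theorem \ref{YY}. Concretely, I would prove that $\Delta(A)\subset_\Delta \Delta(B)$ and $\Delta(B)\subset_\Delta \Delta(A)$, and then apply Theorem \ref{YY} to the von Neumann algebras $\Delta(A),\Delta(B)$ to conclude $\Delta(A)\sim_\Delta \Delta(B)$.

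First I would unpack the hypothesis $B\subset_\Delta A$ via the equivalence (i)$\Leftrightarrow$(ii) of Theorem \ref{8}. This produces reflexive lattices $\cl L_1,\cl L_2$, completely isometric onto homomorphisms $\alpha: A\rightarrow \Alg{\cl L_1}$ and $\beta: B\rightarrow \Alg{\cl L_2}$, and an onto $w^*$-continuous $*$-homomorphism $\theta: \cl L_1''\rightarrow \cl L_2''$ with $\theta(\cl L_1)=\cl L_2$. The key bookkeeping step is to recognise $\theta$ as a map between the commutants of faithful normal representations of the diagonals. Indeed, for a reflexive algebra one checks that $\Delta(\Alg{\cl L})=\cl L'$, so that $\Delta(\Alg{\cl L})'=\cl L''$; moreover $\alpha$, being a $w^*$-continuous complete isometry, restricts to a normal $*$-isomorphism of $\Delta(A)$ onto $\Delta(\Alg{\cl L_1})=\cl L_1'$, and likewise for $\beta$. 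Hence $\alpha|_{\Delta(A)}$ and $\beta|_{\Delta(B)}$ are faithful normal representations of the von Neumann algebras $\Delta(A),\Delta(B)$ whose commutants are exactly $\cl L_1''$ and $\cl L_2''$, and $\theta$ is an onto $w^*$-continuous $*$-homomorphism between these commutants.

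This is precisely condition (i) of Theorem \ref{999} for the von Neumann algebras $\Delta(A)$ and $\Delta(B)$, so the equivalence (i)$\Leftrightarrow$(ii) there yields $\Delta(B)\subset_\Delta \Delta(A)$. Running the same argument on the second hypothesis $A\subset_\Delta B$ produces the reverse onto $*$-homomorphism between the relevant commutants, and hence $\Delta(A)\subset_\Delta \Delta(B)$. With both embeddings in hand, Theorem \ref{YY} applied to $\Delta(A),\Delta(B)$ gives $\Delta(A)\sim_\Delta \Delta(B)$, as required.

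I expect the only real obstacle to be the identification in the second paragraph: one must verify carefully that the abstract $*$-homomorphism $\theta$ furnished by Theorem \ref{8} is literally the commutant map demanded by hypothesis (i) of Theorem \ref{999}. This hinges on the computation $\Delta(\Alg{\cl L})=\cl L'$ and on the fact that a $w^*$-continuous completely isometric homomorphism restricts to a normal $*$-isomorphism on diagonals; once these are in place, the remainder is a direct chaining of Theorems \ref{8}, \ref{999} and \ref{YY}.
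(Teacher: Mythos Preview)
Your proposal is correct and follows the same overall strategy as the paper: establish $\Delta(A)\subset_\Delta\Delta(B)$ and $\Delta(B)\subset_\Delta\Delta(A)$, then invoke Theorem~\ref{YY} for von Neumann algebras. The paper's proof is a single line (``We can easily see that $\Delta(A)\subset_\Delta\Delta(B)$ and $\Delta(B)\subset_\Delta\Delta(A)$'') followed by the appeal to Theorem~\ref{YY}, so the skeleton is identical.

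The only difference is in how the reduction to diagonals is justified. You route through the lattice characterisation of Theorem~\ref{8} and then feed the resulting onto $*$-homomorphism $\theta:\cl L_1''\to\cl L_2''$ into condition~(i) of Theorem~\ref{999}; this is valid, and your identifications $\Delta(\Alg{\cl L})=\cl L'$ and $\Delta(\Alg{\cl L})'=\cl L''$ are correct. The more direct reading of the paper's ``easily see'' is to pass the TRO embedding itself to the diagonals: if $\beta(B)\sim_{TRO} p\,\alpha(A)\,p$ via a TRO $M$, then by Proposition~2.8 of \cite{ele1} one has $\Delta(\beta(B))=[MM^*]^{-w^*}$ and $p\,\Delta(\alpha(A))\,p=[M^*M]^{-w^*}$, whence $\beta(\Delta(B))\sim_{TRO} p\,\alpha(\Delta(A))\,p$ and so $\Delta(B)\subset_\Delta\Delta(A)$. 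Your route trades this short TRO computation for the machinery of Theorems~\ref{8} and~\ref{999}; both reach the same conclusion and both finish with Theorem~\ref{YY}.
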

 \begin{proof} 
We can easily see that $\Delta (A)\subset _\Delta \Delta (B)$ and $\Delta (B)\subset _\Delta \Delta (A).$ Now, we can
 apply the above theorem. \end{proof} 

\begin{example}\label{a} Let $A$ be a factor, and $B$ be a unital dual operator 
algebra such that $B\subset _\Delta A.$ Then, $B$ is a von Neumann algebra, and $B\sim _\Delta A.$ \end{example} 
\begin{proof} 
There exist a $*$-isomorphism $\alpha : A\rightarrow \alpha (A)$, a $w^*$- continuous completely isometric homomorphism 
$\beta : B\rightarrow \beta (B)$, and a TRO $M$ such that if $p$ is the projection onto $[MM^*]^{-w^*}$, then 
$$\beta (B)=[M^*\alpha (A)M]^{-w^*}, \;\;\;p\alpha (A)p=[M\beta (B)M^*]^{-w^*}.$$ Define $N=[\alpha (A)pM]^{-w^*}.$ 
Because $$MM^*p\alpha (A)\subseteq p\alpha (A)\subseteq \alpha (A),$$ it follows that $$pMM^*p\subseteq 
\alpha (A)\Rightarrow [\alpha (A)pMM^*p\alpha (A)]^{-w^*}=[NN^*]^{-w^*}$$ is an ideal of $\alpha (A).$ However,
 $\alpha (A)$ is a factor, and thus $\alpha (A)=[NN^*]^{-w^*}.$ On the other hand, $$[N^*N]^{-w^*}=[M^*p\alpha 
(A)\alpha (A)pM]^{-w^*}=[M^*\alpha (A)M]^{-w^*}=\beta (B).$$ Thus, $A$ and $B$ are weakly Morita equivalent in the 
sense of Rieffel. However, in the case of von Neumann algebras, Rieffel's Morita equivalence is the same as $\Delta $-equivalence.
 \end{proof} 

\subsection{A counterexample in non-self-adjoint operator algebras} 

Despite the situation for von Neumann algebras, 
we shall prove that if $A, B$ are unital non-self-adjoint dual operator algebras, it does not always hold that the implication
 \begin{equation}\label{****}A\subset _\Delta B, \;\;\;B\subset _\Delta A\Rightarrow A\sim _\Delta B.\end{equation} 

By Theorem 3.12 in \cite{ele3}, if $A, B$ are nest algebras, then $$A\sim _\Delta B\Leftrightarrow A\sim _{TRO}B.$$ 
Because for every nest algebra $B$ and every projection $p\in B$ the algebra $pBp$ is a nest algebra, we can conclude that 
$$A\subset _\Delta B\Leftrightarrow A\subset _{TRO}B.$$ Thus, in order to prove that (\ref{****}) does not hold, it 
suffices to find nest algebras $A$ and $B$ such that $A\subset _{TRO} B, \;\;B\subset _{TRO}A$ and $A$ is not TRO-equivalent 
to $B.$ Let $m$ be the Lebesgue measure on the Borel sets of the interval $[0,1].$ Suppose that $\bb Q$ is the set of 
rationals, and $Q^+$ (resp. $ Q^-$) is the projection onto $l^2(\bb Q\cap [0,t])$ (resp. $l^2(\bb Q\cap [0,t))$ ). 
Furthermore, let $N_t$ be the projection onto $L^2([0,t],m)$ for $0\leq t\leq 1$. We define the nest $$\cl N=\{ Q^+_t\oplus 
N_t, Q^-_t\oplus N_t,\;\;\; 0\leq t\leq 1\}.$$ By $A=Alg(\cl N),$ we denote the corresponding nest algebra acting on the Hilbert 
space $$H=l^2(\bb Q\cap [0,1])\oplus L^2([0,1],m).$$ The above nest appeared in Example 7.18 in \cite{dav}. Suppose that 
$f(t)=\frac{1}{2}, t\in [0,1],$ and define $$\cl M=\{ Q^+_{f(t)}\oplus N_{f(t)}, Q^-_{f(t)}\oplus N_{f(t)},\;\;\; 0\leq t\leq 1\}.$$
 We can define a unitary $$u_2: L^2([0,1]) \rightarrow L^2([0,\frac{1}{2}]) $$ such that $u_2(\chi _\Omega )=\sqrt{2}\chi _{f(\Omega )}$ 
where $\chi _\Omega $ is the characteristic function of the Borel set $\Omega .$ This unitary maps $N_t$ onto $N_{f(t)}$ in the
 sense that $$u_2N_tu_2^*=N_{f(t)}, \;\;0\leq t\leq 1.$$ Furthermore, the map $$ \{Q^+_t, Q^-_t: \;\;0\leq t\leq 1\} \longrightarrow 
\{Q^+_{f(t)}, Q^-_{f(t)}: \;\;0\leq t\leq 1\} $$ sending $Q^j_t$ onto $Q^j_{f(t)}$ for $j=+, -$ is a nest isomorphism. Because 
these nests are multiplicity free (they generate a maximal abelian self-adjoint algebra, referred to as an MASA from this point) 
and totally atomic, the above map extends as a $*$-isomorphism between the corresponding MASAs. Thus, there exists a unitary 
$$u_1: l^2(\bb Q\cap [0,1]) \rightarrow l^2(\bb Q\cap [0,\frac{1}{2}]) $$ such that $$ u_2Q^+_tu_2^*=Q^+_{f(t)}, \;\; u_2Q^-_tu_2^*=
Q^-_{f(t)}, 0\leq t\leq 1.$$ Therefore, the unitary $u=u_1\oplus u_2,$ implies a unitary equivalence between $\cl N$ and $\cl M.$ 

Let $s$ be the projection $$s: l^2(\bb Q\cap [0,1]) \rightarrow l^2(\bb Q\cap [0,\frac{1}{2}]) $$ and $r$ be the projection 
$$r: L^2([0,1],m ) \rightarrow L^2([0,\frac{1}{2}],m).$$ If $p=s\oplus r,$ then $p\in A$ and $pAp=Alg(\cl M).$ By the above 
arguments, $A$ and $pAp$ are unitarily equivalent, and thus they are TRO-equivalent. Suppose that $q_0$ is the projection 
$$q_0=\left(\begin{array}{clr} 0 & 0\\ 0 & I_{L^2([0,1])-r}\end{array}\right)\in A$$ and $q=p+q_0.$ Because $p\leq q\leq Id_A$,
 we have that $$pAp\subset_{TRO} qAq \subset_{TRO} A.$$ However, $A\sim _{TRO}pAp.$ This implies that $A\subset _{TRO}qAq.$ 
Thus, if (\ref{****}) holds, then we should have that $$A\sim _{TRO}qAq.$$ Suppose that $\cl L$ is the nest $Lat (qAq).$ 
By Theorem 3.3 in \cite{ele1}, there exists a $*$-isomorphism $$\theta : \Delta (A) ^\prime \rightarrow ( \Delta (A)|_{q(H)} )
^\prime $$ such that $\theta (\cl N)=\cl L.$ However, the algebras $\Delta (A), \Delta (A)|_{q(H)} $ are MASAs. Therefore, 
there exists a unitary $w: q(H)\rightarrow H$ such that $$\theta (x)=w^*xw,\;\;\forall x\;\in \;\Delta (A)=\Delta (A)^\prime .$$ 
We have that $$A=wqAqw^*.$$ We can easily see that $\cl L=\cl L_1\cup \cl L_2,$ where $$\cl L_1=\{Q^+_t\oplus N_t, Q^-_t\oplus N_t,
\;\;\; 0\leq t\leq \frac{1}{2}\}$$ and $$\cl L_2=\{ Q^+_{ \frac{1}{2}}\oplus N_t, \;\;\; \frac{1}{2}\leq t \leq 1\}.$$
 Observe that $L_1\leq L_0\leq L_2$ for all $L_i\in \cl L_i, i=1,2$ where $L_0=Q^+_ {\frac{1}{2}} \oplus N_{\frac{1}{2}} .$ 
If $$M_0=wL_0w^*, \;\;\cl N_1=w\cl L_1w^*, \;\;\cl N_2=w\cl L_2w^*,$$ then $$M_1\leq M_0\leq M_2$$ for all $M_i\in \cl N_i, i=1,2.$ 
Suppose that 
$M_0=Q^+_ {t_0} \oplus N_{t_0} .$ Then, $$\cl N_2=\{ Q^+_t\oplus N_t, Q^-_t\oplus N_t,\;\;\; t_0\leq t\leq 1\}.$$ If 
$$ \hat{ \cl L_2} =\{(Q^+_{ \frac{1}{2}}\oplus N_t)-L_0: \;\;\frac{1}{2}\leq t\leq 1\},$$ we can consider $\hat{ \cl L_2}$ 
to be a nest acting on $L^2([\frac{1}{2},1],m).$ Furthermore, if $$\hat{ \cl N_2}=\{ (Q^+_{t}\oplus N_t)-M_0, (Q^-_{t}\oplus N_t)-M_0,
 \;\;t_0\leq t\leq 1\},$$ then $\hat{ \cl L_2}$ and $\hat{ \cl N_2}$ are isomorphic nests. However, this is impossible, because 
$\hat{ \cl L_2} $ is a continuous nest and $\hat{ \cl N_2} $ is a nest with atoms. This contradiction shows that $A$ and $qAq$ 
are not TRO-equivalent.

 \begin{remark} Let $A$ and $q$ be as above. As we have seen, $qAq\subset _{\Delta }A, \;\;A\subset _\Delta qAq$
 but $A$ and $qAq$ are not $\Delta $-equivalent. We can prove further that they are not Morita equivalent even in the sense of Blecher
 and Kashyap \cite{bk}, \cite{kashyap}. If they were, then by \cite{elenest} $\cl N$ and $\cl L$ would be isomorphic as nests. However,
 we can see that this is impossible by applying the same arguments as above. \end{remark} 

\section{Morita embeddings for dual operator
 spaces} 

Definition \ref{2} can be adapted to the setting of dual operator spaces as follows. 

\begin{definition} \label{10} \cite{ele5}
 Let $H_1, H_2, K_1, K_2$ be Hilbert spaces, and let $$X\subseteq B(H_1, H_2), Y\subseteq B(K_1, K_2)$$ be $w^*$-closed spaces. 
We call these weakly TRO-equivalent if there exist TROs $M_i\subseteq B(H_i, K_i), i=1,2$ such that $$X=[M_2^*YM_1]^{-w^*}, \;\;\;Y=
[M_2XM_1^*]^{-w^*}.$$ In this case, we write $X\sim _{TRO}Y.$ \end{definition} 

\begin{remark}\label{ivan} If $W_1, W_2$ are Hilbert 
spaces and $Z$ is a subspace of $B(W_1, W_2),$ then we call it nondegenerate if $ \overline{Z(W_1)}=W_2 ,\;\; \overline{Z^*(W_2)}=W_1. $ 
If $H_1, H_2, K_1,\\ K_2, X, Y$ are as in the above definition, $p_2$ (resp. $q_2$) is the projection onto $\overline{X(H_1)}$ 
(resp. $\overline{Y(K_1)}$), and $p_1$ (resp. $q_1$) is the projection onto $\overline{X^*(H_2)}$ (resp. $\overline{Y^*(K_2)}$), 
then the spaces $p_2X|_{p_1(H_1)}, \;q_2Y|_{q_1(K_1)}$ are nondegenerate, and also weakly TRO-equivalent. This can be concluded from 
Proposition 2.2 in \cite{ele5}. \end{remark}

 The following defines our notion of weak Morita equivalence for dual operator spaces.
 
\begin{definition}\label{11} \cite{ele5} Let $X, Y$ be dual operator spaces. We call these weakly $\Delta $-equivalent if there exist 
$w^*$-continuous completely isometric maps $\phi, \psi,$ respectively, such that $\phi (X)\sim _{TRO}\psi (Y).$ In this case, we 
write $X\sim _\Delta Y.$ \end{definition} 

The following theorem constitutes the main result of \cite{ele5}. 

\begin{theorem}\label{12}
Let $X, Y$ be dual operator spaces. Then, the following are equivalent: 

(i) $X\sim _\Delta Y.$ 

(ii) There exists a cardinal $I$ and
 a $w^*$-continuous completely isometric map from $M_I(X)$ onto $M_I(Y).$ \end{theorem}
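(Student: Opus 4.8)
The plan is to prove Theorem \ref{12}, namely the equivalence
\[
X\sim _\Delta Y \quad\Longleftrightarrow\quad \exists\,I:\; M_I(X)\cong M_I(Y)
\]
completely isometrically and $w^*$-homeomorphically. Since this is the dual-operator-space analogue of the algebra result $(ii)\Leftrightarrow(iii)$ of Theorem \ref{4} (the stable-isomorphism characterisation of $\Delta$-equivalence), the strategy is to reduce the space statement to the algebra statement by the standard \emph{linking algebra} construction, so that the already-available machinery of \cite{ele4} does the heavy lifting.

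For the direction $(i)\Rightarrow(ii)$, I would first invoke Definition \ref{11} to replace $X,Y$ by $w^*$-continuously completely isometric copies $\phi(X)\subseteq B(H_1,H_2)$ and $\psi(Y)\subseteq B(K_1,K_2)$ with $\phi(X)\sim_{TRO}\psi(Y)$; using Remark \ref{ivan} I may assume these copies are nondegenerate, so there are TROs $M_1,M_2$ with $\phi(X)=[M_2^*\psi(Y)M_1]^{-w^*}$ and $\psi(Y)=[M_2\phi(X)M_1^*]^{-w^*}$. The key step is then to pass to the associated $w^*$-closed linking algebras: form
\[
\cl A=\left(\begin{array}{cc}[M_2M_2^*]^{-w^*} & \phi(X)\\ \phi(X)^* & [M_1^*M_1]^{-w^*}\end{array}\right),\qquad
\cl B=\left(\begin{array}{cc}[\,?\,] & \psi(Y)\\ \psi(Y)^* & [\,?\,]\end{array}\right),
\]
built from the TRO data so that $\cl A\sim_{TRO}\cl B$ as dual operator algebras, with the copies of $X$ and $Y$ sitting as the off-diagonal $(1,2)$-corners. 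Applying Theorem \ref{4}, $(ii)\Rightarrow(iii)$, to $\cl A$ and $\cl B$ yields a cardinal $I$ and a $w^*$-continuous completely isometric isomorphism $M_I(\cl A)\cong M_I(\cl B)$; because corners are preserved and $M_I(\,\cdot\,)$ distributes over the $2\times 2$ matrix structure, restricting this isomorphism to the corner corresponding to the spaces gives the desired $w^*$-continuous complete isometry $M_I(X)\to M_I(Y)$.

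The converse $(ii)\Rightarrow(i)$ is the easier direction: given $I$ and a $w^*$-continuous complete isometry $u:M_I(X)\to M_I(Y)$, I would exhibit a concrete TRO equivalence. Writing $M_I(X)\cong X\bar\otimes B(\ell^2(I))$ as noted in the introduction, one takes the row/column TROs associated with the inflation (the operators $M_i$ being copies of the canonical inclusions $X\hookrightarrow M_I(X)$ implemented by matrix units $e_{1j}$), and checks directly that $X\sim_{TRO}M_I(X)$ and similarly $Y\sim_{TRO}M_I(Y)$; composing through the complete isometry $u$ then gives $\phi(X)\sim_{TRO}\psi(Y)$ for suitable representations, i.e.\ $X\sim_\Delta Y$. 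The main obstacle I anticipate is the forward direction, specifically verifying that the off-diagonal corner of the linking algebras correctly recovers the original operator-space structure of $X$ and $Y$ (rather than some compression that loses information) and that the TRO equivalence of spaces in Definition \ref{10} genuinely upgrades to a TRO equivalence of the linking algebras in the sense of Definition \ref{2}; getting the diagonal corners $[M_iM_i^*]^{-w^*}$ and $[M_i^*M_i]^{-w^*}$ to match up compatibly under the equivalence is where the bookkeeping is delicate, and this is precisely the point where one must lean on Remark \ref{ivan} and the results of \cite{ele5} to guarantee nondegeneracy and the correct intertwining.
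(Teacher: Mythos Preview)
The paper does not prove Theorem \ref{12}; it simply records it as ``the main result of \cite{ele5}'' and uses it as a black box. So there is no in-paper proof to compare against, and the relevant benchmark is the argument in \cite{ele5}.

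Your overall strategy --- pass to linking algebras and invoke the algebra case (Theorem \ref{4}, i.e.\ \cite{ele4}) --- is indeed the philosophy of \cite{ele5}, but your sketch has two genuine gaps and one structural slip.

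\textbf{The linking algebra is mis-specified.} With $\phi(X)\subseteq B(H_1,H_2)$ and $M_i\subseteq B(H_i,K_i)$, the algebras $[M_2M_2^*]^{-w^*}$ and $[M_1^*M_1]^{-w^*}$ act on $K_2$ and $H_1$, so they cannot flank $\phi(X)$ as written. The correct object (and the one actually used in this paper, see the proof of Lemma \ref{xx}) is the \emph{upper-triangular} algebra
\[
\Omega(\phi(X))=\begin{pmatrix}[M_2^*M_2]^{-w^*} & \phi(X)\\ 0 & [M_1^*M_1]^{-w^*}\end{pmatrix},
\]
not a self-adjoint $2\times2$ algebra with $\phi(X)^*$ in the $(2,1)$ corner; the latter need not even be closed under multiplication.

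\textbf{Corner preservation in $(i)\Rightarrow(ii)$ is not automatic.} Granting $\Omega(\phi(X))\sim_{TRO}\Omega(\psi(Y))$ and applying Theorem \ref{4} gives \emph{some} $w^*$-completely isometric isomorphism $M_I(\Omega(\phi(X)))\cong M_I(\Omega(\psi(Y)))$, but an arbitrary such isomorphism has no reason to send the $(1,2)$-corner to the $(1,2)$-corner. In \cite{ele5} this is handled not by a generic appeal to Theorem \ref{4} but by controlling the TRO explicitly: one shows the TRO implementing the equivalence of the $\Omega$-algebras is itself block-diagonal, $N=N_2\oplus N_1$ (this is precisely Lemma 2.8 of \cite{ele5}, invoked in the present paper inside Lemma \ref{xx}), and it is this diagonal form that lets you peel off the isomorphism of the corners. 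Your sentence ``because corners are preserved'' is exactly the step that needs this extra input.

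\textbf{The converse hides a transitivity issue.} Your argument for $(ii)\Rightarrow(i)$ says: $X\sim_{TRO}M_I(X)$, $Y\sim_{TRO}M_I(Y)$, and $M_I(X)\cong M_I(Y)$, hence $X\sim_\Delta Y$. The last ``hence'' uses transitivity of $\sim_\Delta$ for dual operator \emph{spaces}, which is itself a nontrivial theorem of \cite{ele5} (it requires re-representing one space so the TROs match up, as in Lemma \ref{xx} here). You cannot simply ``compose through the complete isometry $u$'' at the TRO level, because $u$ is abstract and need not intertwine the concrete TROs coming from the two amplifications. Either cite transitivity from \cite{ele5} explicitly, or carry out the re-representation argument.
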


 \begin{remark} Throughout Section 3, we shall
 employ the following notation. If $H_1, H_2$ are Hilbert spaces, $X\subseteq B(H_1, H_2)$ is a $w^*$-closed subspace, 
and $q\in B(H_1), \;p\in B(H_2)$ are projections such that 
$pX\subseteq X,\;Xq\subseteq X$, then by $pXq$ we denote the space $\{pxq: x\in X\}\subseteq B(H_1, H_2).$ This space is 
$w^*$-closed 
and completely isometrically and $w^*$-homeomorphically isomorphic with the space $pX|_{q(H_1)}\subseteq B(q(H_1), p(H_2)).$ 
\end{remark} 

\subsection{TRO-embeddings for dual operator spaces} 

\begin{definition} \label{13} Let $H_1, H_2, K_1, K_2$ be 
Hilbert spaces, and let $X\subseteq B(H_1, H_2), \\Y\subseteq B(K_1, K_2)$ be $w^*$-closed spaces. We say that $Y$ weakly TRO 
embeds into $X$ if there exist TROs $M_1\subseteq B(H_1, K_1)$ and $M_2\subseteq B(H_2, K_2)$ such that $Y=[M_2XM_1^*]^{-w^*}, 
\;\;M_2^*YM_1\subseteq X$ and $M_2^*M_2X\subseteq X, \;\;XM_1^*M_1\subseteq X.$ In this case, we write $Y\subset _{TRO}X.$ 
\end{definition} 

\begin{remark}\label{soter} We can easily see that if $Y\subset _{TRO}X,$ then there exist projections $p, q$ 
such that $pX\subseteq X, \;\;Xq\subseteq X$ and $Y\sim _{TRO}pXq.$ \end{remark} 

\begin{examples}\label{examp}

\em{ (i) If 
$X\sim _{TRO}Y$, then clearly $X\subset _{TRO}Y$ and $Y\subset _{TRO}X.$ 

(ii) If $K_i, W_i, i=1,2$ are Hilbert spaces, 
$$Y\subseteq B(K_1, K_2),\;\;Z\subseteq B(W_1, W_2)$$ are $w^*$-closed spaces, and $$X=Y\oplus Z\subseteq B(K_1\oplus W_1,
 K_2\oplus W_2),$$ then $Y\subset _{TRO}X.$ For the proof, we apply the TROs $M_1=( \bb CI_{K_1}, 0), \;\;\;M_2=( \bb CI_{K_2}, 0).$ 

(iii) If $X\subseteq B(H_1, H_2)$ is a $w^*$-closed operator space and $p\in B(H_2), q\in B(H_1)$ are projections such that 
$pX\subseteq X, \;\;Xq\subseteq X,$ then $pXq\subset _{TRO}X.$ 

(iv) A generalisation of $W^*$-modules over von Neumann algebras 
is given by the projectively $w^*$-rigged modules over unital dual operator algebras. See \cite{blekra} for more details. 
Given a unital dual operator algebra $A,$ a projectively $w^*$-rigged module over $A$ is a dual operator space $Z$ that is 
completely isometrically and $w^*$-homeomorphically isomorphic to a space $Y=[MA]^{-w^*},$ where $M$ is a TRO satisfying 
$M^*M\subseteq A.$ Observe that $Y=[MA\bb C]^{-w^*},\;\;\;M^*Y\bb C\subseteq A$ and $M^*MA\subseteq A, \;\;A\bb C\subseteq A.$ 
Thus, $Y\subset _{TRO}A.$ Therefore, for every projectively $w^*$-rigged module $Z$ over a unital dual operator algebra $A,$ 
we have that $Z\subset _\Delta A.$ Here, $\subset _\Delta $ is the relation defined in Definition \ref{7} below.} \end{examples}

 \begin{proposition}\label{14b} Let $X, Y, Z$ be $w^*$-closed operator spaces. If $$Z\subset _{TRO}Y, \;\;\;Y\subset _{TRO}X, $$ 
then there exist projections $p, q$ such that $pX\subseteq X, \;\;Xq\subseteq X$ and $Z\subset _{TRO}pXq.$ \end{proposition}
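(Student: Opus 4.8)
The plan is to peel off the space $Y$ by replacing it with a corner of $X$, and then to prove a composition principle relating $\subset_{TRO}$ and $\sim_{TRO}$. First I would apply Remark \ref{soter} to the hypothesis $Y\subset_{TRO}X$, obtaining projections $p,q$ with $pX\subseteq X$, $Xq\subseteq X$ and $Y\sim_{TRO}pXq$. These are precisely the projections required by the conclusion, so the whole statement reduces to proving $Z\subset_{TRO}pXq$. Writing $W=pXq$, everything now follows from the composition principle
\[
Z\subset_{TRO}Y\ \text{and}\ Y\sim_{TRO}W\ \Longrightarrow\ Z\subset_{TRO}W,
\]
which I would isolate and prove as a lemma.

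To prove this principle I would compose the witnessing TROs, in the spirit of the proof of Proposition \ref{7a}. Write $Z\subseteq B(L_1,L_2)$, $Y\subseteq B(K_1,K_2)$, $W\subseteq B(W_1,W_2)$. By Definition \ref{13} the embedding $Z\subset_{TRO}Y$ is given by TROs $N_1\subseteq B(K_1,L_1)$, $N_2\subseteq B(K_2,L_2)$ with $Z=[N_2YN_1^*]^{-w^*}$, $N_2^*ZN_1\subseteq Y$, $N_2^*N_2Y\subseteq Y$ and $YN_1^*N_1\subseteq Y$, while $Y\sim_{TRO}W$ is given by TROs $R_1\subseteq B(W_1,K_1)$, $R_2\subseteq B(W_2,K_2)$ with $Y=[R_2WR_1^*]^{-w^*}$ and $W=[R_2^*YR_1]^{-w^*}$. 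I would then propose $S_1=[N_1R_1]^{-w^*}$ and $S_2=[N_2R_2]^{-w^*}$ as the TROs witnessing $Z\subset_{TRO}W$. The two algebraic identities are immediate: $[S_2WS_1^*]^{-w^*}=[N_2R_2WR_1^*N_1^*]^{-w^*}=[N_2YN_1^*]^{-w^*}=Z$, and $S_2^*ZS_1=R_2^*N_2^*ZN_1R_1\subseteq R_2^*YR_1\subseteq W$.

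The remaining points are the real work and the main obstacle. A product of two TROs need not be a TRO, so I must verify directly that $S_1,S_2$ are TROs and that the multiplier conditions $S_2^*S_2W\subseteq W$ and $WS_1^*S_1\subseteq W$ hold. To obtain the relations needed, I would first pass to the nondegenerate part of the equivalence $Y\sim_{TRO}W$ by Remark \ref{ivan}, which by Proposition 2.2 in \cite{ele5} supplies the support identities $W=[R_2^*R_2W]^{-w^*}=[WR_1^*R_1]^{-w^*}$ and, crucially, $[R_2R_2^*]^{-w^*}=[YY^*]^{-w^*}$ together with its right-hand analogue $[R_1R_1^*]^{-w^*}=[Y^*Y]^{-w^*}$. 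Since $N_2^*N_2Y\subseteq Y$ forces $N_2^*N_2$ to multiply $[YY^*]^{-w^*}$ into itself, and this von Neumann algebra is unital on $K_2$ in the nondegenerate situation, I get $N_2^*N_2\in[YY^*]^{-w^*}=[R_2R_2^*]^{-w^*}$; then $R_2R_2^*N_2^*N_2R_2\subseteq[R_2R_2^*]^{-w^*}R_2\subseteq[R_2]^{-w^*}$, which gives $S_2S_2^*S_2\subseteq S_2$, and symmetrically $N_1^*N_1\in[R_1R_1^*]^{-w^*}$ yields that $S_1$ is a TRO. For the multiplier conditions, $W=[WR_1^*R_1]^{-w^*}$ gives $R_2W\subseteq[YR_1]^{-w^*}$, whence $S_2^*S_2W=R_2^*N_2^*N_2R_2W\subseteq[R_2^*N_2^*N_2YR_1]^{-w^*}\subseteq[R_2^*YR_1]^{-w^*}=W$ using $N_2^*N_2Y\subseteq Y$, and symmetrically $WS_1^*S_1\subseteq W$ follows from $W=[R_2^*R_2W]^{-w^*}$ and $YN_1^*N_1\subseteq Y$. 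I expect the delicate step to be justifying $N_i^*N_i\in[R_iR_i^*]^{-w^*}$, i.e. controlling how the multipliers coming from the inner embedding sit inside the support algebras of the outer equivalence; this is exactly where the nondegeneracy reduction of Remark \ref{ivan} and the structure of Proposition 2.2 in \cite{ele5} are indispensable. Once these are in place, Definition \ref{13} yields $Z\subset_{TRO}W=pXq$, as required.
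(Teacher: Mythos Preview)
Your overall plan coincides with the paper's: reduce via Remark~\ref{soter} to the composition principle ``$Z\subset_{TRO}Y$ and $Y\sim_{TRO}W\Rightarrow Z\subset_{TRO}W$'' (this is exactly what the paper records as Remark~\ref{350000}), and your verifications of $[S_2WS_1^*]^{-w^*}=Z$, $S_2^*ZS_1\subseteq W$, and the two multiplier conditions $S_2^*S_2W\subseteq W$, $WS_1^*S_1\subseteq W$ are correct provided $S_1,S_2$ are TROs.

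The gap is precisely where you flag it: the identity $[R_2R_2^*]^{-w^*}=[YY^*]^{-w^*}$ is not supplied by Proposition~2.2 of \cite{ele5} (that proposition concerns passage to nondegenerate parts, nothing more) and is false in general. Take $K_i=W_i=\mathbb{C}^2$, $Y=W=M_2(\mathbb{C})$ and $R_1=R_2=\mathbb{C}I_2$; this is a TRO-equivalence between nondegenerate spaces, yet $[R_2R_2^*]^{-w^*}=\mathbb{C}I_2\ne M_2(\mathbb{C})=[YY^*]^{-w^*}$. So you cannot conclude $N_i^*N_i\in[R_iR_i^*]^{-w^*}$, and your argument that $S_i=[N_iR_i]^{-w^*}$ is a TRO breaks down. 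The paper repairs exactly this point: instead of the bare product it uses
\[
L_i=[N_iD_iR_i]^{-w^*},\qquad D_i=W^*\text{-algebra generated by }\{R_iR_i^*\}\cup\{N_i^*N_i\}.
\]
Because $R_iR_i^*\subseteq D_i$ and $N_i^*N_i\subseteq D_i$, the triple product $L_iL_i^*L_i$ collapses back into $L_i$. Moreover $[R_2R_2^*Y]^{-w^*}=Y$ and $N_2^*N_2Y\subseteq Y$ force $D_2Y=Y$ (and symmetrically $YD_1=Y$), and with this in hand the identities $[L_2WL_1^*]^{-w^*}=Z$, $L_2^*ZL_1\subseteq W$, $L_2^*L_2W\subseteq W$, $WL_1^*L_1\subseteq W$ follow by the same manipulations you already wrote down for $S_i$. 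Inserting the generated von Neumann algebra $D_i$ is the one missing idea.
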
 
\begin{proof} There exist projections $p,q,r,s$ such that $$pX\subseteq X, \;Xq\subseteq X,\;rY\subseteq Y,\;Ys\subseteq Y$$ 
and TROs $M_i, N_i, i=1,2$ such that $$Y=[M_2pXqM_1^* ]^{-w^*} ,\;\;pXq=[M_2^*YM_1]^{-w^*} ,$$
$$Z=[N_2rYsN_1^*]^{-w^*},\;rYs=
[N_2^*ZN_1]^{-w^*}.$$ We may assume that $$M_2p=M_2,\;M_1q=M_1,\;N_2r=N_2,\;N_1s=N_1. $$ Suppose that $D_i$ is the $W^*$-algebra 
generated by the set $$\{M_iM_i^*\}\cup \{N_i^*N_i\}, i=1,2.$$ Define $$L_i=[N_iD_iM_i]^{-w^*}, i=1,2.$$ Because $M_1M_1^*\subseteq 
D_1, \;\;N_1^*N_1\subseteq D_1,$ it follows that $$N_1D_1M_1M_1^*D_1N_1^*N_1D_1M_1\subseteq N_1D_1M_1,$$ and thus $$L_1L_1^*L_1
\subseteq L_1.$$ Therefore, $L_1,$ and similarly $L_2,$ are TROs. Now, we have that $$[L_2pXqL_1^* ]^{-w^*} =[N_2D_2M_2pXqM_1^*D_1N_1^*]^
{-w^*} =[N_2D_2YD_1N_1^*]^{-w^*} .$$ Because $$[M_2M_2^*Y]^{-w^*} =Y=[YM_1M_1^*]^{-w^*} ,\;N_2^*N_2Y\subseteq Y, \;YN_1^*N_1\subseteq Y,$$ 
we have that $D_2Y=Y=YD_1.$ 
Thus, \begin{equation}\label{sav1} [L_2pXqL_1^* ]^{-w^*} =[N_2YN_1^*]^{-w^*} =[N_2rYsN_1^*]^{-w^*} =Z. 
\end{equation} 
Furthermore, $$L_2^*ZL_1\subseteq [M_2^*D_2N_2^*ZN_1D_1M_1 ]^{-w^*} =[M_2^*D_2rYsD_1M_1]^{-w^*} \subseteq $$ 
$$[M_2^*D_2YD_1M_1]^{-w^*} =[M_2^*YM_1]^{-w^*} .$$ Thus, 
\begin{equation}\label{sav2} L_2^*ZL_1\subseteq pXq. \end{equation} 
On the other hand, $$L_2^*L_2pXq\subseteq [M_2^*D_2N_2^*N_2D_2M_2pXq ]^{-w^*}\subseteq [M_2^*D_2M_2pXq]^{-w^*}=$$ 
$$[M_2^*D_2M_2M_2^*YM_1]^{-w^*}\subseteq [M_2^*YM_1]^{-w^*}.$$ Thus, $L_2^*L_2pXq\subseteq pXq$, and similarly 
$pXqL_1^*L_1\subseteq pXq.$ Therefore, the relations (\ref{sav1}) and (\ref{sav2}) imply that $Z\subset _{TRO}pXq.$ 
\end{proof}

 \begin{remark}\label{350000} From the above proof, we isolate the fact that if $Z\subset _{TRO}Y$ and $Y\sim _{TRO}X,$ 
then $Z\subset _{TRO}X.$ \end{remark}

 \subsection{$\Delta $-embeddings for dual operator spaces} 

\begin{definition}\label{7} 
Let $X, Y$ be dual operator spaces. We say that $Y$ weakly $\Delta $-embeds into $X$ if there exist $w^*$-continuous completely 
isometric maps $$\phi : X\rightarrow \phi (X), \;\;\;\psi :Y \rightarrow \psi (Y)$$ such that $\psi (Y)\subset _{TRO} \phi (X).$ 
In this case, we write $Y\subset _\Delta X.$ \end{definition} 

\begin{definition}\label{16a} Let $X, Y$ be dual operator spaces. A map 
$\phi : X\rightarrow Y$ that is one-to-one, $w^*$-continuous, and completely bounded with a completely bounded inverse is called a 
$w^*$-c.b. isomorphism, and the spaces $X, Y$ are called $w^*$-c.b. isomorphic. Under the above assumptions, the map $\phi ^{-1}$ is 
also $w^*$-continuous. \end{definition} 

\begin{definition}\label{16} Let $X, Y$ be dual operator spaces. We call these c.b. 
$\Delta $-equivalent if there exist $w^*$- c.b. isomorphisms $\phi: X\rightarrow \phi(X), \;\; \psi: Y\rightarrow \psi (Y)$ such 
that $\phi(X)\sim _{TRO} \psi (Y).$ In this case, we write $X\sim _{cb\Delta }Y.$ \end{definition} 

\begin{definition}\label{17}Let 
$X, Y$ be dual operator spaces. We say that $Y$ c.b. $\Delta $-embeds into $X$ if there exist $w^*$-c.b. isomorphisms 
$\phi: X\rightarrow \phi (X), \psi: Y\rightarrow \psi (Y) $ such that $\psi(Y) \subset _{TRO}\phi (X).$ In this case, we 
write $Y\subset _{cb\Delta }X.$ \end{definition} 

\begin{remark}\label{18} Observe the following: 

(i) $X\sim _\Delta Y\Rightarrow X\sim _{cb\Delta }Y$ 

(ii) $X\subset _\Delta Y\Rightarrow X\subset _{cb\Delta }Y$ \end{remark}

 In what follows, if $X$ is a dual operator space, then $M_l(X)$ (resp. $M_r(X)$) denotes the algebra of left (resp. right) 
multipliers of $X.$ In this case, $A_l(X)=\Delta (M_l(X)),$ (resp. $A_r(X)=\Delta (M_r(X))$) is a von Neumann algebra \cite{bm}.

 \begin{lemma}\label{xx} Suppose that $Z, Y$ are $w^*$-closed operator spaces satisfying $Z\sim _{TRO}Y,$ $H_1, H_2$ are Hilbert 
spaces such that $$A_l(Y)\subseteq B(H_2), \;\;\;A_r(Y)\subseteq B(H_1),$$ and $\psi: Y\rightarrow B(H_1, H_2)$ is a $w^*$-continuous 
complete isometry such that $$A_l(Y)\psi (Y)A_r(Y)\subseteq \psi (Y).$$ Then, there exists a $w^*$-continuous complete isometry 
$\zeta: Z\rightarrow \zeta (Z)$ such that $\zeta (Z) \sim _{TRO} \psi (Y).$ \end{lemma}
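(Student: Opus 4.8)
The goal is: given $Z \sim_{TRO} Y$ with an abstract realization data for $Y$ (namely a $w^*$-continuous complete isometry $\psi$ on $Y$ compatible with the multiplier actions $A_l(Y), A_r(Y)$), produce a compatible realization $\zeta$ of $Z$ that is TRO-equivalent to $\psi(Y)$. Let me think about how to structure this.

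My plan is to start from the hypothesis $Z \sim_{TRO} Y$, which by Definition 2.22 gives Hilbert spaces and TROs $M_1, M_2$ realizing the TRO-equivalence between some concrete copies of $Z$ and $Y$. The key idea is that the TRO-equivalence, together with the multiplier structure, transports canonically. First I would use Remark 2.23 (and Proposition 2.2 of [ele5]) to reduce to nondegenerate representations, so that the projections onto the relevant closed ranges are identities, cleaning up the TRO action. The crucial observation is that $A_l(Y)$ and $A_r(Y)$ are intrinsic (they are the diagonals of the multiplier algebras, hence depend only on the operator space structure of $Y$, not on a particular representation), so the action $A_l(Y)\psi(Y)A_r(Y)\subseteq\psi(Y)$ encodes the canonical left/right multiplier actions realized spatially on $\psi(Y)$.

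The main construction: using the TROs $M_1, M_2$ implementing $Z\sim_{TRO}Y$, I would define $\zeta(Z)$ to be the spatial compression/expansion of $\psi(Y)$ along $M_1, M_2$. Concretely, one transports the TRO structure so that $\zeta(Z) = [M_2 \psi(Y) M_1^*]^{-w^*}$ lives on the appropriate Hilbert spaces $B(\overline{M_1^*(\cdot)}, \overline{M_2(\cdot)})$, and then check that $\zeta$ so defined is a $w^*$-continuous complete isometry on $Z$. That $\zeta$ is completely isometric follows because TRO-equivalence preserves the complete operator space structure (this is the content of the theory behind Definition 2.22 and Theorem \ref{12}); that it is $w^*$-continuous follows from the $w^*$-closedness and the normal spatial nature of the TRO operations. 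By construction $\zeta(Z)\sim_{TRO}\psi(Y)$ via $M_1, M_2$, which is exactly the conclusion.

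The step I expect to be the main obstacle is verifying that the map $\zeta$ obtained by this spatial transport is genuinely a complete \emph{isometry} on the abstract space $Z$ (not merely completely bounded) and is independent, up to the stated conclusion, of the choices made — in particular reconciling the abstract multiplier-compatibility hypothesis $A_l(Y)\psi(Y)A_r(Y)\subseteq\psi(Y)$ with the concrete TRO data. The delicate point is that $\psi$ is given abstractly as a complete isometry respecting the multiplier actions, while the TROs $M_1, M_2$ come from a possibly different concrete realization of $Y$; so I would need to identify $\psi(Y)$ with that concrete copy via a TRO-equivalence or a spatial isomorphism intertwining the multiplier algebras. I expect this identification to rely on the uniqueness/rigidity of the multiplier algebras $A_l, A_r$ together with Proposition 2.1 of \cite{ele1} (used analogously in Lemma \ref{**} and Remark \ref{ivan}), allowing me to write the desired TRO explicitly and then read off that $\zeta(Z)\sim_{TRO}\psi(Y)$.
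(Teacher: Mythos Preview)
Your plan has a genuine gap at exactly the point you flag as ``the delicate point.'' You propose to set $\zeta(Z)=[M_2\,\psi(Y)\,M_1^*]^{-w^*}$, but this expression is not well-defined: the TROs $M_1,M_2$ implementing $Z\sim_{TRO}Y$ live between the Hilbert spaces carrying the \emph{original} concrete copy of $Y$ and those carrying $Z$, whereas $\psi(Y)\subseteq B(H_1,H_2)$ sits on a completely unrelated pair of Hilbert spaces. There is simply no composition $M_2\psi(y)$ or $\psi(y)M_1^*$ to form. The hoped-for fix via ``uniqueness/rigidity of $A_l,A_r$'' and Proposition~2.1 of \cite{ele1} does not supply the missing TROs: that proposition only upgrades inclusions to equalities once the TROs are already in place, and rigidity of multiplier algebras does not by itself produce a spatial intertwiner between two arbitrary completely isometric copies of $Y$.

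The paper's argument supplies precisely the machinery you are missing. Instead of working directly with the spaces, it passes to the \emph{linking algebras}
\[
\Omega(Z)=\begin{pmatrix} A & Z\\ 0 & B\end{pmatrix},\qquad
\Omega(Y)=\begin{pmatrix} C & Y\\ 0 & D\end{pmatrix},
\]
where $A,B,C,D$ are the $w^*$-closures of $M_2^*M_2,\,M_1^*M_1,\,M_2M_2^*,\,M_1M_1^*$. These are TRO-equivalent as unital dual operator \emph{algebras} via $M_2\oplus M_1$. The multiplier hypothesis $A_l(Y)\psi(Y)A_r(Y)\subseteq\psi(Y)$ is then used to build injective $*$-homomorphisms $\gamma:C\to A_l(Y)$ and $\delta:D\to A_r(Y)$ (here nondegeneracy is needed for injectivity), and hence a $w^*$-continuous completely isometric homomorphism $\pi$ of $\Omega(Y)$ whose $(1,2)$-corner is exactly $\psi$. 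Now one invokes Theorem~2.7 of \cite{ele3}: given a completely isometric normal representation of one of two TRO-equivalent unital dual operator algebras, there is a compatible one of the other, TRO-equivalent to the first. This produces a representation $\rho$ of $\Omega(Z)$ and a TRO $N$; the structural results of \cite{ele5} force $N=N_2\oplus N_1$ and $\rho$ to be block upper-triangular, so its $(1,2)$-corner is the desired $\zeta$ with $\zeta(Z)=[N_2^*\psi(Y)N_1]^{-w^*}$ and $\psi(Y)=[N_2\zeta(Z)N_1^*]^{-w^*}$. The new TROs $N_1,N_2$ acting on $H_1,H_2$ are precisely what your direct spatial approach could not manufacture.
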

 \begin{proof} Assume that $M_1, M_2$ are 
TROs such that $$Z=[M_2^*YM_1]^{-w^*}, \;\;\;Y=[M_2ZM_1^*]^{-w^*}.$$ By Remark \ref{ivan}, we may assume that $Z$ and $Y$ are 
nondegenerate spaces. We denote $$A=[M_2^*M_2]^{-w^*}, \;\; B=[M_1^*M_1] ^{-w^*},\;\; C=[M_2M_2^*] ^{-w^*},\;\; D=[M_1M_1^*] ^{-w^*} . $$ 
The algebras $$\Omega (Z)= \left(\begin{array}{clr} A & Z \\ 0 & B \end{array}\right) , \;\;\;\Omega (Y)= \left(\begin{array}{clr}
 C & Y \\ 0 & D \end{array}\right) $$ are weakly TRO-equivalent as algebras. Indeed, $$ \Omega (Z)=[M^*\Omega (Y)M]^{-w^*} ,
 \;\;\;\Omega (Y)=[M\Omega (Z)M^*]^{-w^*}, $$ where $M$ is the TRO $M_2\oplus M_1.$ If $c\in C,$ then define $$\gamma (c): 
\psi (Y)\rightarrow \psi(Y),\;\;\;\gamma (c)\psi (y)=\psi (cy). $$ We can easily see that $\gamma (c)\in A_l(Y)$ and $\|\gamma (c)\|\leq 1.$
 Thus, $\gamma :C\rightarrow A_l(Y)$ is a contractive homomorphism and hence a $*$-homomorphism. If $\gamma (c)=0,$ then $cY=0.$
 Because $Y$ is nondegenerate, we conclude that $c=0.$ Thus, $\gamma $ is a one-to-one $*$-homomorphism. Similarly,
 there exists a one-to-one $*$-homomorphism $\delta : D\rightarrow A_r(Y)$ such that $\psi (y)\delta (d)=\psi (yd),\;\;\forall \;y.$
 The map $\pi : \Omega (Y)\rightarrow \pi (\Omega(Y)) $, given by $$\pi \left(\left(\begin{array}{clr} c & y \\ 0 & d \end{array}\right) 
\right)= \left(\begin{array}{clr} \gamma (c)& \psi (y) \\ 0 & \delta (d)\end {array}\right), $$ is a $w^*$-continuous completely 
isometric homomorphism. This can be shown by applying 3.6.1 in \cite{bm}. By Theorem 2.7 in \cite{ele3}, there exists a $w^*$-continuous 
completely isometric homomorphism $\rho : \Omega (Z)\rightarrow \rho (\Omega (Z))$ and a TRO $N$ such that 
$$ \rho (\Omega (Z))=[N^*\pi (\Omega (Y))N]^{w^*} ,\;\;\;\pi (\Omega (Y))=[N\rho (\Omega (Z))N^*]^{w^*}. $$ As in the
 discussion concerning the map $\Phi$ below Theorem 2.5 in \cite{ele5}, the map $\rho $ is given by 
$$\rho \left(\left(\begin{array}{clr} a & z \\ 0 & b \end{array}\right) \right)= \left(\begin{array}{clr} \alpha (a) & 
\zeta (z) \\ 0 & \beta (b)\end {array}\right), $$ where $\alpha : A\rightarrow \alpha (A), \;\;\zeta : Z\rightarrow \zeta (Z), 
\;\;\beta : B\rightarrow \beta (B)$ are completely isometric maps. By Lemma 2.8 in \cite{ele5}, the TRO $N$ is of the form 
$N=N_2\oplus N_1.$ Thus, $$ \zeta (Z)=[N_2^*\psi (Y)N_1]^{-w^*}, \;\;\;\psi (Y)=[N_2\zeta (Z)N_1^*]^{-w^*}. $$ \end{proof}

\begin{lemma}\label{19} Let $Z, \Omega , X$ be dual operator spaces. We assume that $Z\sim _{TRO}\Omega $, and that $\psi _0: 
\Omega \rightarrow \psi_0( \Omega )$ is a $w^*$-continuous complete isometry such that $\psi _0(\Omega ) \subset _{TRO}X.$ Then, 
there exists a $w^*$-c.b. isomorphism $\hat \phi : X\rightarrow \hat \phi (X)$ and a $w^*$-continuous complete isometry $\zeta : 
Z\rightarrow \zeta (Z)$ such that $\zeta (Z)\subset _{TRO}\hat \phi (X).$ Thus, $Z\subset _{cb\Delta }X.$ \end{lemma}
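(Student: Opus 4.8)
The plan is to transport the TRO-equivalence $Z\sim_{TRO}\Omega$ across the embedding $\psi_0(\Omega)\subset_{TRO}X$ by means of Lemma \ref{xx}, at the cost of replacing $X$ by a $w^*$-c.b.\ isomorphic copy. Recall that Lemma \ref{xx} requires a representation of $\Omega$ for which the \emph{full} multiplier algebras act, i.e.\ a $w^*$-continuous complete isometry $\psi:\Omega\to B(H_1,H_2)$ with $A_l(\Omega)\subseteq B(H_2)$, $A_r(\Omega)\subseteq B(H_1)$ and $A_l(\Omega)\psi(\Omega)A_r(\Omega)\subseteq\psi(\Omega)$. The given representation $\psi_0$ need not have this property, so the first and main task is to produce such a $\psi$ together with a compatible re-representation of $X$.

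First I would unpack the hypothesis. By Definition \ref{13} there are TROs $M_1,M_2$ with $\psi_0(\Omega)=[M_2XM_1^*]^{-w^*}$, $M_2^*\psi_0(\Omega)M_1\subseteq X$, $M_2^*M_2X\subseteq X$ and $XM_1^*M_1\subseteq X$; by Remark \ref{ivan} I may assume the relevant spaces are nondegenerate. The von Neumann algebras $[M_2M_2^*]^{-w^*}$ and $[M_1M_1^*]^{-w^*}$ then act as left, resp.\ right, multipliers on $\psi_0(\Omega)$ and embed into $A_l(\Omega)$, $A_r(\Omega)$, while $[M_2^*M_2]^{-w^*}$ and $[M_1^*M_1]^{-w^*}$ act on $X$. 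The idea is to pass to the representation of $\Omega$ on which all of $A_l(\Omega)$, $A_r(\Omega)$ act (such a representation exists by the multiplier theory in \cite{bm}), and to carry the TRO data $M_1,M_2$ along this change of multiplier representation. Since for operator spaces a change of the left/right module (multiplier) structure is in general implemented by a $w^*$-c.b.\ isomorphism rather than a complete isometry, this produces a $w^*$-c.b.\ isomorphism $\hat\phi:X\to\hat\phi(X)$ and a multiplier-invariant complete isometry $\psi:\Omega\to\psi(\Omega)$ with $\psi(\Omega)\subset_{TRO}\hat\phi(X)$.

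With this in place, Lemma \ref{xx} applies with $Y=\Omega$ and the representation $\psi$: it yields a $w^*$-continuous complete isometry $\zeta:Z\to\zeta(Z)$ such that $\zeta(Z)\sim_{TRO}\psi(\Omega)$. It then remains to combine $\zeta(Z)\sim_{TRO}\psi(\Omega)$ with $\psi(\Omega)\subset_{TRO}\hat\phi(X)$. This is the analogue of Remark \ref{350000} with the order of $\sim_{TRO}$ and $\subset_{TRO}$ reversed; it follows by the same TRO computation as in Proposition \ref{14b}, composing the equivalence TROs for $\zeta(Z)\sim_{TRO}\psi(\Omega)$ with the embedding TROs for $\psi(\Omega)\subset_{TRO}\hat\phi(X)$ and using nondegeneracy so that the supporting projections act as units. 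This gives $\zeta(Z)\subset_{TRO}\hat\phi(X)$, and since $\zeta$ is in particular a $w^*$-c.b.\ isomorphism, Definition \ref{17} yields $Z\subset_{cb\Delta}X$.

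I expect the main obstacle to be the first step: arranging a single change of representation that simultaneously makes the copy of $\Omega$ multiplier-invariant (so that Lemma \ref{xx} applies) and is implemented on $X$ by a $w^*$-c.b.\ isomorphism preserving the TRO-embedding. The fact that this cannot in general be done completely isometrically, only completely boundedly, is exactly why the conclusion is $\subset_{cb\Delta}$ rather than $\subset_\Delta$. A secondary, more routine technical point is the verification of the reversed-order transitivity of $\sim_{TRO}$ and $\subset_{TRO}$ invoked at the end, which goes through by Proposition \ref{14b} and the nondegeneracy reductions of Remark \ref{ivan}.
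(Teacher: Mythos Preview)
Your outline has the right overall architecture, but the step you yourself flag as the ``main obstacle'' is not actually carried out, and the paper resolves it by a different, concrete device that you are missing. You try to produce simultaneously a multiplier-invariant copy $\psi(\Omega)$ and a $w^*$-c.b.\ isomorphic copy $\hat\phi(X)$ with $\psi(\Omega)\subset_{TRO}\hat\phi(X)$, appealing vaguely to ``carrying the TRO data along a change of multiplier representation.'' No mechanism for this is given, and in fact this assertion is essentially as strong as the lemma itself (it is the special case $Z=\Omega$, with the extra demand that the resulting copy of $\Omega$ be multiplier-invariant). The paper avoids this entirely: it first passes, via Remark~\ref{soter}, from $\psi_0(\Omega)\subset_{TRO}X$ to $\psi_0(\Omega)\sim_{TRO}pXq$, and then applies Lemma~\ref{xx} \emph{twice} --- once to $Z\sim_{TRO}\Omega$ to obtain $\zeta(Z)\sim_{TRO}\psi(\Omega)$, and once to $pXq\sim_{TRO}\Omega$ to obtain a complete isometry $\phi:pXq\to\phi(pXq)$ with $\phi(pXq)\sim_{TRO}\psi(\Omega)$. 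This yields $\zeta(Z)\sim_{TRO}\phi(pXq)$ directly. The $w^*$-c.b.\ isomorphism $\hat\phi$ is then written down explicitly as the block-diagonal map
\[
\hat\phi(x)=\phi(pxq)\oplus p^\bot xq\oplus xq^\bot,
\]
which is completely bounded with completely bounded inverse by an open mapping argument on the ampliation; this is where the ``only c.b.'' loss occurs, not in any abstract change-of-multiplier procedure. With this block form, $\phi(pXq)$ is literally the $(1,1)$ corner of $\hat\phi(X)$, and the TROs $N_i=(M_i\;0\;0)$ give $\zeta(Z)\subset_{TRO}\hat\phi(X)$ by direct inspection.

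Your final step also has a gap. You want ``$\zeta(Z)\sim_{TRO}\psi(\Omega)$ and $\psi(\Omega)\subset_{TRO}\hat\phi(X)$ imply $\zeta(Z)\subset_{TRO}\hat\phi(X)$,'' and claim this is Proposition~\ref{14b} / Remark~\ref{350000} with the order reversed. But Proposition~\ref{14b} only yields $\zeta(Z)\subset_{TRO}p\,\hat\phi(X)\,q$ for some projections $p,q$, not $\zeta(Z)\subset_{TRO}\hat\phi(X)$; and Remark~\ref{350000} has $\sim_{TRO}$ on the \emph{right}, where the corner projections become identities, not on the left. The paper does not need any such transitivity because the explicit block-diagonal form of $\hat\phi$ makes the embedding verifiable by hand.
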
 
\begin{proof}
 Suppose that $$A_l(\Omega )\subseteq B(H_2), \;\;\;A_r(\Omega )\subseteq B(H_1)$$ and $\psi : \Omega \rightarrow B(H_1, H_2)$ is a
 $w^*$-continuous complete isometry such that $$A_l(\Omega )\psi(\Omega ) A_r(\Omega )\subseteq \psi (\Omega ).$$ By Lemma \ref{xx},
 there exists a $w^*$-continuous complete isometry $\zeta : Z\rightarrow \zeta (Z)$ such that $$\zeta (Z)\sim _{TRO}\psi (\Omega ).$$ 
We assume that $p,q$ are projections such that $pX\subseteq X, Xq\subseteq X$ and $$\psi _0(\Omega ) \sim _{TRO}pXq.$$ 
Again by Lemma \ref{xx}, there exists a $w^*$-continuous complete isometry $\phi : pXq\rightarrow \phi (pXq)$ such that
 $\psi (\Omega)\sim _{TRO} \phi (pXq).$ Define $$\hat \phi (x)= \left(\begin{array}{clr} \phi (pxq) & 0 & 0 \\ 0 & p^\bot xq & 0 
\\ 0 & 0 & xq^\bot \end{array}\right) ,$$ for all $x\in X.$ Observe that $\hat \phi $ is a $w^*$-continuous completely bounded 
and one-to-one map. If $\hat \phi ^\infty $ is the $\infty \times \infty $ amplification of $\hat \phi $, then $\hat \phi ^\infty $ 
has a closed range. Thus, by the open mapping theorem, $\hat \phi ^\infty $ has a bounded inverse. Therefore, $\hat \phi ^{-1}$ 
is completely bounded. We have that $\zeta (Z)\sim _{TRO}\phi (pXq).$ Thus, there exist TROs $M_1, M_2$ such that 
$$\zeta (Z)=[M_2\phi (pXq)M_1^*]^{-w^*}, \;\;\;\phi (pXq)=[M_2^*\zeta (Z)M_1]^{-w^*}.$$ Define the TROs 
$$N_i=\left(\begin{array}{clr} M_i & 0 &0\end{array}\right), \;\;i=1,2.$$ We can see that $$[N_2\hat \phi (X)N_1^*]^{-w^*}
=\zeta (Z),\;\;\;N_2^*\zeta (Z)N_1\subseteq \hat \phi (X)$$ and $$ N_2^*N_2\hat \phi (X)\subseteq \hat \phi (X), \;\;
\hat \phi (X)N_1^*N_1\subseteq \hat \phi (X). $$ Thus, $\zeta (Z)\subset _{TRO}\hat \phi (X).$ \end{proof} 

\begin{theorem}\label{20} 
Let $X, Y$ be dual operator spaces. Then, the following are equivalent. 

(i) $Y\subset _{cb\Delta }X.$ 

(ii) There exist $w^*$-c.b. 
isomorphisms $\psi : Y\rightarrow \psi (Y), \;\;\;\phi: X\rightarrow \phi(X); $ projections $p,q$ such that $p\phi (X)\subseteq 
\phi(X), \;\;\; \phi (X)q\subseteq \phi (X)$; a cardinal $I$; and a completely isometric $w^*$-continuous onto map $$\pi : 
M_I(\psi (Y))\rightarrow M_I(p\phi (X)q).$$ \end{theorem}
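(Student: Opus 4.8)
The plan is to establish the equivalence by reducing both implications to Theorem \ref{12}, which characterises $\sim_\Delta$ in terms of the existence of a cardinal $I$ and a $w^*$-continuous completely isometric isomorphism between the $I\times I$ amplifications. The key bridge is Remark \ref{soter}, which says that a TRO-embedding $\psi(Y)\subset_{TRO}\phi(X)$ is witnessed by projections $p,q$ with $p\phi(X)\subseteq\phi(X)$, $\phi(X)q\subseteq\phi(X)$ and $\psi(Y)\sim_{TRO}p\phi(X)q$. So the whole theorem is essentially the statement ``$\psi(Y)\sim_{TRO}p\phi(X)q$ is equivalent, after passing to amplifications, to a completely isometric $w^*$-homeomorphism $M_I(\psi(Y))\to M_I(p\phi(X)q)$,'' packaged together with the $w^*$-c.b.\ isomorphisms $\phi,\psi$ that define $\subset_{cb\Delta}$.

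For the implication $(i)\Rightarrow(ii)$, I would start from Definition \ref{17}: $Y\subset_{cb\Delta}X$ gives $w^*$-c.b.\ isomorphisms $\phi:X\to\phi(X)$, $\psi:Y\to\psi(Y)$ with $\psi(Y)\subset_{TRO}\phi(X)$. First I apply Remark \ref{soter} to extract projections $p,q$ with the stated invariance conditions and $\psi(Y)\sim_{TRO}p\phi(X)q$. Since $\psi(Y)$ and $p\phi(X)q$ are weakly $\Delta$-equivalent (they are even TRO-equivalent, so $\psi(Y)\sim_\Delta p\phi(X)q$ by Definition \ref{11}), Theorem \ref{12} supplies a cardinal $I$ and a $w^*$-continuous completely isometric onto map $\pi:M_I(\psi(Y))\to M_I(p\phi(X)q)$. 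This is exactly the data required in $(ii)$.

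For $(ii)\Rightarrow(i)$, I start from the given $\phi,\psi$, the projections $p,q$, and the map $\pi:M_I(\psi(Y))\to M_I(p\phi(X)q)$. By Theorem \ref{12}, the existence of such a $\pi$ forces $\psi(Y)\sim_\Delta p\phi(X)q$. Now $p\phi(X)q\subset_{TRO}\phi(X)$ by Examples \ref{examp}(iii), whose TRO-embedding data I would combine with the $\sim_{TRO}$ (hence $\sim_\Delta$) relation between $\psi(Y)$ and $p\phi(X)q$; here I expect to invoke the transitivity machinery, concretely Remark \ref{350000} (if $Z\subset_{TRO}Y$ and $Y\sim_{TRO}X$ then $Z\subset_{TRO}X$) or directly Proposition \ref{14b}, to conclude that, after composing $\psi$ with a suitable $w^*$-continuous completely isometric map arising from the $\sim_\Delta$ relation, one obtains representations in which $\psi(Y)\subset_{TRO}\phi(X)$ holds. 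This yields $Y\subset_{cb\Delta}X$ via Definition \ref{17}.

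The main obstacle I anticipate is bookkeeping in $(ii)\Rightarrow(i)$: Theorem \ref{12} only produces \emph{some} new completely isometric representation realising the $\sim_\Delta$ relation between $\psi(Y)$ and $p\phi(X)q$, and it is not immediate that this new representation can be glued to the fixed $w^*$-c.b.\ isomorphism $\phi$ so that the resulting embedding of $Y$ lands inside the same $\phi(X)$ with a genuine TRO-embedding rather than merely a $\Delta$-embedding. The care here is that $\subset_{cb\Delta}$ allows us to pre- and post-compose by $w^*$-c.b.\ isomorphisms, so I have the freedom to absorb the extra completely isometric map from Theorem \ref{12} into a new $\psi$; verifying that the composite remains a $w^*$-c.b.\ isomorphism (not merely completely bounded) and that the TRO-embedding conditions of Definition \ref{13} survive the composition is the delicate part, and it is precisely where Lemma \ref{19} and Remark \ref{350000} do the real work.
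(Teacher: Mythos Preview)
Your proposal is correct and follows essentially the same route as the paper: the $(i)\Rightarrow(ii)$ direction is identical (Definition \ref{17}, Remark \ref{soter}, then Theorem \ref{12}), and for $(ii)\Rightarrow(i)$ you correctly identify Theorem \ref{12} and Lemma \ref{19} as the two ingredients and accurately pinpoint the obstacle that Lemma \ref{19} is designed to overcome. The only cosmetic difference is that the paper first replaces $\phi$ by the block-diagonal $\hat\phi(x)=p\phi(x)q\oplus p^\bot\phi(x)q\oplus\phi(x)q^\bot$ and invokes Example \ref{examp}(ii) to get $p\phi(X)q\subset_{TRO}\hat\phi(X)$, whereas you go straight through Example \ref{examp}(iii) with $p\phi(X)q\subset_{TRO}\phi(X)$; since Lemma \ref{19} internally performs exactly this block-diagonal construction, the two are the same argument. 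Your passing mention of Remark \ref{350000} and Proposition \ref{14b} is a slight misdirection (they operate within fixed representations, which is precisely the issue), but you recognise this yourself and land on Lemma \ref{19} as the tool that actually changes the representation of $X$ to absorb the new complete isometry coming from Theorem \ref{12}.
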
 
\begin{proof}
 $$(i)\Rightarrow (ii)$$

 By definition, there exist 
$w^*$-c.b. isomorphisms $\psi : Y\rightarrow \psi (Y), \;\;\;\phi: X\rightarrow \phi(X) $ such that $$\psi (Y)\subset _{TRO}
\phi (X).$$ There exist projections $p,q$ such that $p\phi (X)\subseteq \phi(X), \;\;\; \phi (X)q\subseteq \phi (X)$ and 
$\psi(Y)\sim _{TRO} p\phi (X)q.$ By Theorem \ref{12}, there exists a cardinal $I$ and a completely isometric $w^*$-continuous 
onto map $$\pi : M_I( \psi (Y) )\rightarrow M_I( p\phi (X)q ).$$ 

$$(ii)\Rightarrow (i)$$ 

Define $$\hat \phi (x)= 
\left(\begin{array}{clr} p\phi (x)q & 0 & 0 \\ 0 & p^\bot \phi (x)q & 0 \\ 0 & 0 & \phi (x)q^\bot \end{array}\right), $$ 
for all $x\in X.$ As in the proof of Lemma \ref{19}, we can see that $\hat \phi $ is a $w^*$-c.b. isomorphism. By Example 
\ref{examp} (ii), we have that $$p\phi (X)q\subset _{TRO}\hat \phi (X).$$ By Theorem \ref{12}, it holds that 
$\psi (Y)\sim _\Delta p\phi (X)q .$ Thus, there exist completely isometric $w^*$-continuous maps $$\mu : \psi (Y)\rightarrow 
\mu (\psi (Y)),\;\;\;\chi : p\phi (X)q\rightarrow \chi (p\phi (X)q)$$ such that $\mu (\psi (Y))\sim _{TRO} \chi ( p\phi (X)q ) .$ 
Now, apply Lemma \ref{19} for $$Z= \mu (\psi (Y)) , \;\;\;\Omega =\chi ( p\phi (X)q ) ,\;\;\;\psi _0=\chi ^{-1}: \Omega \rightarrow
 p\phi (X)q.$$ We have that $$\psi _0(\Omega )\subset _{TRO}\hat \phi (X).$$ We conclude that $$\mu (\psi (Y)) \subset _{cb\Delta } 
\hat \phi (X)\Rightarrow Y\subset _{cb\Delta }X.$$ \end{proof} 

\begin{lemma}\label{soson} Suppose that 
$Z\subseteq B(W_1, W_2),\;Y\subseteq B(K_1, K_2)$ are $w^*$-closed spaces such that $Z\subset _{TRO}Y.$ We also assume 
that $p_2$ is the projection onto $\overline{Y(K_1)},$ and $p_1$ is the projection onto $\overline{Y^*(K_2)}.$ Thus, 
$Y_0=p_2Y|_{p_1(K_1)}$ is a nondegenerate space into $B(p_1(K_1), p_2(K_2)).$ We shall prove that $Z\subset _{TRO}Y_0.$ 
\end{lemma}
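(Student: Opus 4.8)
The plan is to unwind Definition \ref{13} and transport the embedding data by compressing the ambient TROs onto the ranges determined by $p_1,p_2$. Since $Z\subset_{TRO}Y$, there exist TROs $M_1\subseteq B(K_1,W_1)$ and $M_2\subseteq B(K_2,W_2)$ with
$$Z=[M_2YM_1^*]^{-w^*},\quad M_2^*ZM_1\subseteq Y,\quad M_2^*M_2Y\subseteq Y,\quad YM_1^*M_1\subseteq Y.$$
From the definitions of $p_2$ (the projection onto $\overline{Y(K_1)}$) and $p_1$ (the projection onto $\overline{Y^*(K_2)}$) one has $p_2y=y$ and $yp_1=y$ for every $y\in Y$, so that $p_2Yp_1=Y$ and $Y_0$ is precisely the compression of $Y$ to the subspaces $p_1(K_1),p_2(K_2)$. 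I would then set $M_2'=M_2p_2$ and $M_1'=M_1p_1$ and regard them, after restricting their domains, as candidate TROs in $B(p_2(K_2),W_2)$ and $B(p_1(K_1),W_1)$, aiming to show that $Z\subset_{TRO}Y_0$ is witnessed by $M_1',M_2'$.

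The crux of the argument is a commutation fact. Writing $A=[M_2^*M_2]^{-w^*}$ and $B=[M_1^*M_1]^{-w^*}$, the TRO identity $M_2M_2^*M_2\subseteq M_2$ makes $A$ and $B$ von Neumann algebras, and the inclusions $M_2^*M_2Y\subseteq Y$, $YM_1^*M_1\subseteq Y$ pass to the $w^*$-closures to give $AY\subseteq Y$ and $YB\subseteq Y$. Hence $A$ leaves $\overline{Y(K_1)}=p_2(K_2)$ invariant and $B$ leaves $\overline{Y^*(K_2)}=p_1(K_1)$ invariant; since $A,B$ are self-adjoint, these subspaces are reducing, so $p_2\in A'$ and $p_1\in B'$. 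With this in hand the compressions are TROs: for instance
$$M_2'M_2'^*M_2'=M_2p_2M_2^*M_2p_2=M_2M_2^*M_2p_2\subseteq M_2p_2=M_2',$$
where the middle equality uses $p_2M_2^*M_2=M_2^*M_2p_2$, and symmetrically for $M_1'$.

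It then remains to check the four conditions of Definition \ref{13} for $Z\subset_{TRO}Y_0$, all of which reduce to the identity $p_2Yp_1=Y$ together with $p_2\in A'$, $p_1\in B'$. Composing on the subspaces, $M_2'Y_0M_1'^*$ equals $M_2p_2Yp_1M_1^*=M_2YM_1^*$, so its $w^*$-closure is $Z$; likewise $M_2'^*ZM_1'=p_2M_2^*ZM_1p_1\subseteq p_2Yp_1=Y_0$. For the last two conditions, a typical element of $M_2'^*M_2'Y_0$ has the form $p_2M_2^*M_2p_2y=M_2^*M_2y\in Y$ after using the commutation and $p_2y=y$, hence lies in $Y_0$; the inclusion $Y_0M_1'^*M_1'\subseteq Y_0$ is obtained symmetrically from $YM_1^*M_1\subseteq Y$. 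The main obstacle, and the only nonroutine point, is establishing the commutation relations $p_2\in A'$, $p_1\in B'$; once these are secured, both the TRO property of the compressions and the verification of the embedding conditions follow by routine bookkeeping with the compressions to $p_1(K_1)$ and $p_2(K_2)$.
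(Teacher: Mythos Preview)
Your proposal is correct and follows essentially the same approach as the paper: compress the TROs by $p_1,p_2$ and verify the four conditions of Definition~\ref{13}. The only cosmetic difference is that you establish the key identity $p_2M_2^*M_2=M_2^*M_2p_2$ by observing that $A=[M_2^*M_2]^{-w^*}$ is self-adjoint and leaves $\overline{Y(K_1)}$ invariant (hence $p_2\in A'$), whereas the paper derives the equivalent relation $p_2N_2N_2^*p_2=N_2N_2^*p_2$ by a direct computation from $p_2y=y$ and $N_2N_2^*Y\subseteq Y$; the remaining verifications are identical.
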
 
\begin{proof} By definition, there exist TROs $N_i\subseteq B(W_i, K_i), \;i=1,2$ such that $$Z=[N_2^*YN_1]^{-w^*},
\;N_2ZN_1^*\subseteq Y,\;N_2N_2^*Y\subseteq Y,\;YN_1N_1^*\subseteq Y.$$ Define $M_i=p_iN_i\subseteq B(W_i, p_i(K_i)),\;i=1,2.$ 
We have that $$p_2y=y\Rightarrow p_2m_1m_2^*y=m_1m_2^*y, \;\;\forall \;y\;\in \;Y,\;\;m_1, m_2\in N_2.$$ Thus, $$ p_2N_2N_2^*p_2 
=N_2N_2^*p_2\Rightarrow p_2N_2N_2^*p_2 =p_2N_2N_2^*.$$ The above relation implies that $$M_2M_2^*M_2=p_2N_2N_2^*p_2N_2=p_2N_2N_2^*N_2
\subseteq p_2N_2=M_2.$$ Therefore, $M_2$ is a TRO. Similarly, $M_1$ is also a TRO. Now, we have that $$[M_2^*Y_0M_1]^{-w^*}=
[N_2^*p_2Yp_1N_1]^{-w^*}=[N_2^*YN_1]^{-w^*}=Z$$ and $$M_2ZM_1^*=p_2N_2ZN_1^*p_1\subseteq p_2Yp_1=Y_0.$$ Furthermore, 
$$M_2M_2^*Y_0=p_2N_2N_2^*p_2Yp_1=p_2N_2N_2^*Yp_1\subseteq p_2Yp_1=Y_0.$$ Therefore, $Y_0$ is a nondegenerate subspace 
of $B(p_1(K_1), p_2(K_2))$, and $Z\subset _{TRO}Y_0.$ \end{proof}

 The Lemma below is weaker than Lemma \ref{xx}.

\begin{lemma}\label{270000} Suppose that $Z, Y$ are $w^*$-closed operator spaces satisfying $Z\subset _{TRO}Y$, $H_1, H_2$
 are Hilbert spaces such that $$A_l(Y)\subseteq B(H_2), \;\;\;A_r(Y)\subseteq B(H_1),$$ and $\psi: Y\rightarrow B(H_1, H_2)$ 
is a $w^*$-continuous complete isometry such that $$A_l(Y)\psi (Y)A_r(Y)\subseteq \psi (Y).$$ Then, there exists a $w^*$-continuous 
complete isometry $\zeta: Z\rightarrow \zeta (Z)$ such that $\zeta (Z) \subset _{TRO} \psi (Y).$ \end{lemma}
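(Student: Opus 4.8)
The plan is to reproduce the architecture of Lemma \ref{xx}, replacing the symmetric equivalence $Z\sim_{TRO}Y$ by the one-sided relation $Z\subset_{TRO}Y$ and transporting it, not to a TRO-equivalence, but to a TRO-embedding of suitable linking algebras. The advantage of passing to algebras is that $\subset_{TRO}$ is transitive there (Proposition \ref{7a}) and that equivalences transfer through representations (Theorem 2.7 in \cite{ele3}); these two facts let me sidestep the still-open transitivity of $\subset_{TRO}$ for spaces, which is what obstructs a direct chaining argument via Remark \ref{350000} and Proposition \ref{14b}. First I would use Lemma \ref{soson} to reduce to the case where $Y$, and hence the given representation $\psi(Y)$, is nondegenerate, so that the multiplier actions below are faithful.

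By Definition \ref{13} choose TROs $N_1,N_2$ with $Z=[N_2YN_1^*]^{-w^*}$, $N_2^*ZN_1\subseteq Y$, $N_2^*N_2Y\subseteq Y$ and $YN_1^*N_1\subseteq Y$. Put $A=[N_2N_2^*]^{-w^*}$, $B=[N_1N_1^*]^{-w^*}$, $C=[N_2^*N_2]^{-w^*}$, $D=[N_1^*N_1]^{-w^*}$; these are von Neumann algebras, and the two module conditions guarantee that $\Omega(Z)$ (the upper–triangular algebra with diagonal $A,B$ and $(1,2)$-corner $Z$) and $\Omega(Y)$ (diagonal $C,D$ and corner $Y$) are genuine $w^*$-closed algebras. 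Taking $M=N_2\oplus N_1$ I would check, exactly as in Lemma \ref{xx}, that $\Omega(Z)=[M\,\Omega(Y)\,M^*]^{-w^*}$ while $M^*M=N_2^*N_2\oplus N_1^*N_1\subseteq\Omega(Y)$; by Remark \ref{6} this is precisely $\Omega(Z)\subset_{TRO}\Omega(Y)$. Equivalently, by Definition \ref{5} there is a block–diagonal projection $P\in\Omega(Y)$ with $\Omega(Z)\sim_{TRO}P\Omega(Y)P$.

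Next I would build the representation of $\Omega(Y)$ verbatim from the proof of Lemma \ref{xx}: the relations $CY\subseteq Y$ and $YD\subseteq Y$ produce faithful normal $*$-homomorphisms $\gamma:C\to A_l(Y)$ and $\delta:D\to A_r(Y)$ with $\gamma(c)\psi(y)=\psi(cy)$ and $\psi(y)\delta(d)=\psi(yd)$, and (via 3.6.1 in \cite{bm}) these assemble into a $w^*$-continuous completely isometric homomorphism $\pi:\Omega(Y)\to\pi(\Omega(Y))$ which is the original $\psi$ on the corner. Since $\Omega(Z)\sim_{TRO}P\Omega(Y)P$, I would apply Theorem 2.7 in \cite{ele3} to the representation $\pi(P)\pi(\Omega(Y))\pi(P)$ of $P\Omega(Y)P$, obtaining a $w^*$-continuous completely isometric homomorphism $\rho:\Omega(Z)\to\rho(\Omega(Z))$ with $\rho(\Omega(Z))\sim_{TRO}\pi(P)\pi(\Omega(Y))\pi(P)$. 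As $\pi(P)\pi(\Omega(Y))\pi(P)\subset_{TRO}\pi(\Omega(Y))$ by Remark \ref{66}, Proposition \ref{7a} yields $\rho(\Omega(Z))\subset_{TRO}\pi(\Omega(Y))$.

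Finally I would read off the corner. As in Lemma \ref{xx}, $\rho$ is again upper–triangular with $(1,2)$-entry a complete isometry $\zeta:Z\to\zeta(Z)$; since $P$, and hence $P\Omega(Y)P$, keeps the $2\times 2$ block form, Lemma 2.8 in \cite{ele5} forces the connecting TRO to split as $N=N_2'\oplus N_1'$, and tracking the composition in Proposition \ref{7a} keeps it block–diagonal for the embedding $\rho(\Omega(Z))\subset_{TRO}\pi(\Omega(Y))$. Restricting the defining relations $\rho(\Omega(Z))=[N^*\pi(\Omega(Y))N]^{-w^*}$, $N\rho(\Omega(Z))N^*\subseteq\pi(\Omega(Y))$ and $NN^*\subseteq\pi(\Omega(Y))$ to the $(1,2)$-corner, and setting $\hat M_2=(N_2')^*$, $\hat M_1=(N_1')^*$, gives $\zeta(Z)=[\hat M_2\psi(Y)\hat M_1^*]^{-w^*}$ and $\hat M_2^*\zeta(Z)\hat M_1\subseteq\psi(Y)$, while the diagonal parts $N_2'(N_2')^*\subseteq\gamma(C)\subseteq A_l(Y)$ and $N_1'(N_1')^*\subseteq\delta(D)\subseteq A_r(Y)$, combined with $A_l(Y)\psi(Y)A_r(Y)\subseteq\psi(Y)$, supply the two module conditions $\hat M_2^*\hat M_2\psi(Y)\subseteq\psi(Y)$ and $\psi(Y)\hat M_1^*\hat M_1\subseteq\psi(Y)$. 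By Definition \ref{13} this is exactly $\zeta(Z)\subset_{TRO}\psi(Y)$. I expect the one genuinely delicate point to be this last extraction: verifying that $P$ (and therefore $P\Omega(Y)P$) is block–diagonal so that Lemma 2.8 in \cite{ele5} applies and the connecting TRO decomposes as $N_2'\oplus N_1'$, and that the diagonal compressions of $NN^*$ indeed land in $\gamma(C)$ and $\delta(D)$ — this is precisely what converts the abstract algebra embedding back into the two module conditions of Definition \ref{13}.
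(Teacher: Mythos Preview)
Your core architecture is the paper's: reduce to nondegenerate $Y$ via Lemma \ref{soson}, pass to linking algebras, build the multiplier maps $\gamma:C\to A_l(Y)$ and $\delta:D\to A_r(Y)$, invoke Theorem 2.7 of \cite{ele3}, split the connecting TRO as $N_2\oplus N_1$ via Lemma 2.8 of \cite{ele5}, and read off the corner. The difference is that you route the argument through an embedding $\Omega(Z)\subset_{TRO}\Omega(Y)$ and then appeal to Remark \ref{66} and Proposition \ref{7a} on $\pi(\Omega(Y))$; this detour is both unnecessary and technically flawed.

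The flaw is that your $\Omega(Y)$ (diagonal $C,D$, corner $Y$) is \emph{not unital}: the identity $p\oplus q$ of $C\oplus D$ acts on the corner as $y\mapsto py q$, and $pY\subsetneq Y$ in general. Definition \ref{5}, Remark \ref{66} and Proposition \ref{7a} are all stated for unital $w^*$-closed algebras, so they do not apply to $\pi(\Omega(Y))$. You correctly sense trouble when you flag the post-Proposition-\ref{7a} extraction as ``the one genuinely delicate point''; in fact the transitivity step itself is where the argument breaks.

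The paper's route is the simplification you want. Your own projection $P=p\oplus q$ already gives $P\Omega(Y)P=\Omega(pYq)$, which \emph{is} unital, and $\Omega(Z)\sim_{TRO}\Omega(pYq)$ is a genuine TRO-\emph{equivalence} (Remark \ref{soter} at the linking-algebra level). The paper therefore runs Lemma \ref{xx} verbatim on $\Omega(pYq)$: Theorem 2.7 of \cite{ele3} yields $\zeta$ and a split TRO $N_2\oplus N_1$ with $\zeta(Z)\sim_{TRO}\psi(pYq)$ and $[N_2^*N_2]^{-w^*}=\gamma(C)$, $[N_1^*N_1]^{-w^*}=\delta(D)$. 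No transitivity is needed: since $\psi(pYq)=[N_2^*\zeta(Z)N_1]^{-w^*}\subseteq\psi(Y)$ and $N_2^*N_2\psi(Y)\subseteq\gamma(C)\psi(Y)=\psi(CY)\subseteq\psi(Y)$ (similarly on the right), the TRO-equivalence with the corner $\psi(pYq)$ is already, by Definition \ref{13}, a TRO-embedding $\zeta(Z)\subset_{TRO}\psi(Y)$.
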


 \begin{proof}
 Assume that $M_1, M_2$ are TROs such that $$Z=[M_2^*YM_1] ^{-w^*} ,\;\; M_2ZM_1^* \subseteq Y, \;\;\;M_2M_2^*Y\subseteq Y,\;
\;YM_1M_1^*\subseteq Y.$$ By Lemma \ref{soson}, we may assume that $Y$ is nondegenerate. Suppose that $p$ is the identity of 
$[M_2M_2^*]^{-w^*}$ and $q$ is the identity of $[M_1M_1^*]^{-w^*}.$ Then, we have that $pY\subseteq Y, Yq\subseteq Y.$ We denote
 $A, B, C, D, \Omega (Z)$ as in Lemma \ref{xx}, and $$\Omega (pYq)=\left(\begin{array}{clr} C & pYq \\ 0 & D \end{array}\right) .$$
 If $c\in C$, then define map $$\psi (Y)\rightarrow \psi (Y): \psi(y)\rightarrow \psi (cpy).$$ Clearly, this map belongs to $A_l(Y)$, 
and thus there exists $\gamma (c)\in A_l(Y)$ satisfying $$\gamma(c) \psi (y)=\psi (cpy)\;\;\forall \;y\;\in \;Y. $$ Note that we can 
define a $*$-homomorphism $\gamma : C\rightarrow A_l(Y).$ If $\gamma (c)=0$, then $cy=0$ for all $y\in Y$, and thus because $Y$ is 
nondegenerate, it follows that $c=0.$ Therefore, $\gamma $ is one-to-one. Similarly, there exists a one-to-one $*$-homomorphism 
$$\delta : D\rightarrow A_r(Y)$$ such that $$\psi (y)\delta (d)=\psi (yqd)\;\;\forall \;y\;\in \;Y. $$ We can conclude that there 
exist projections $\hat p\in A_l(Y), \hat q\in A_r(Y)$ such that $$\psi (py)=\hat p\psi(y), \;\;\; \psi (yq)=\psi (y)\hat q,
\;\;\;\forall \;y\;\in \;Y.$$ The map $\pi : \Omega (pYq)\rightarrow \pi (\Omega(pYq)) ,$ given by $$\pi \left(\left(\begin{array}{clr}
 c & pyq \\ 0 & d \end{array}\right) \right)= \left(\begin{array}{clr} \gamma (c)& \psi (pyq) \\ 0 & \delta (d)\end {array}\right), $$ 
is a $w^*$-continuous completely isometric homomorphism. Because $$\Omega (Z)\sim _{TRO}\Omega (pYq),$$ as in Lemma \ref{xx} we can 
find a $w^*$-continuous completely isometric map $\zeta : Z\rightarrow \zeta (Z)$ and TROs $N_1, N_2$ such that $$\zeta (Z)=[N_2\psi 
(pYq)N_1^*] ^{-w^*} , \;\;\;\psi (pYq)=[N_2^*\zeta (Z)N_1] ^{-w^*}\subseteq \psi (Y) ,$$ $$\gamma (C)=[N_2^*N_2]^{-w^*},\;\;\;\delta (D)
=[N_1^*N_1]^{-w^*}.$$ Because $$N_2^*N_2\psi (Y)\subseteq \gamma (C)\psi (Y)=\psi (CY)\subseteq \psi (Y),$$ and similarly
 $\psi(Y)N_1^*N_1\subseteq \psi (Y),$ we have that $\zeta (Z) \subset _{TRO} \psi (Y).$ \end{proof}

 \begin{theorem}\label{21} Let 
$X, Y, Z$ be dual operator spaces such that $Z\subset _\Delta Y, \;\;Y\subset _\Delta X.$ Then: 

(i) There exist a $w^*$ continuous 
complete isometry $\chi: X\rightarrow \chi (X)$ and projections $p, q$ such that $p\chi (X)\subseteq \chi (X),\;\;\; \chi (X)q
\subseteq \chi (X)$ and $ Z\subset _\Delta p\chi (X)q .$ 

(ii) $Z\subset _{cb\Delta }X.$ \end{theorem}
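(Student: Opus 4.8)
The plan is to prove both statements by a careful bootstrapping from the $\mathrm{TRO}$-embedding results already established, using the machinery of left- and right-multiplier algebras to pass between abstract $\Delta$-equivalence and concrete $\mathrm{TRO}$-equivalence inside a fixed representation. Since $Z\subset_\Delta Y$ and $Y\subset_\Delta X$, by Definition \ref{7} there exist $w^*$-continuous complete isometries realising $\psi_1(Z)\subset_{TRO}\psi_2(Y)$ and $\phi_1(Y)\subset_{TRO}\phi_2(X)$. The obstacle is that the two embeddings use \emph{different} completely isometric representations of $Y$, so I cannot directly chain them through Proposition \ref{14b} (which requires all three spaces sitting concretely with the appropriate $\mathrm{TRO}$-embedding relations in one ambient picture). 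My strategy is therefore to transport the first embedding into the representation of $Y$ coming from the second one.

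\textbf{Step 1 (aligning the representations of $Y$).} The two representations $\psi_2(Y)$ and $\phi_1(Y)$ are completely isometric images of the same $Y$, hence $\psi_2(Y)\sim_{TRO}\phi_1(Y)$ trivially (they are completely isometrically $w^*$-homeomorphic, so in particular $\Delta$-equivalent, and for one ambient space I can arrange an honest $\mathrm{TRO}$-equivalence via Lemma \ref{xx}). Concretely I apply Lemma \ref{270000} with the roles $Z\leadsto \psi_1(Z)$ and $Y\leadsto \phi_1(Y)$, using that $A_l$ and $A_r$ are intrinsic to $Y$: since $\psi_1(Z)\subset_{TRO}\psi_2(Y)$ and $\psi_2(Y)$, $\phi_1(Y)$ are two complete-isometry copies of $Y$ satisfying the multiplier-invariance hypothesis $A_l(Y)\phi_1(Y)A_r(Y)\subseteq\phi_1(Y)$ (which I may assume after replacing $\phi_1$ by its canonical multiplier representation), Lemma \ref{270000} produces a $w^*$-continuous complete isometry $\zeta:Z\to\zeta(Z)$ with $\zeta(Z)\subset_{TRO}\phi_1(Y)$. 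This is the key technical move: it rewrites the embedding $Z\subset_\Delta Y$ inside the \emph{same} concrete copy $\phi_1(Y)$ that participates in $\phi_1(Y)\subset_{TRO}\phi_2(X)$.

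\textbf{Step 2 (chaining via Proposition \ref{14p}).} Now all three concrete spaces $\zeta(Z)$, $\phi_1(Y)$, $\phi_2(X)$ satisfy $\zeta(Z)\subset_{TRO}\phi_1(Y)$ and $\phi_1(Y)\subset_{TRO}\phi_2(X)$, exactly the hypotheses of Proposition \ref{14b}. That proposition yields projections $p,q$ with $p\phi_2(X)\subseteq\phi_2(X)$, $\phi_2(X)q\subseteq\phi_2(X)$ and $\zeta(Z)\subset_{TRO}p\phi_2(X)q$. Setting $\chi=\phi_2$, the concrete relation $\zeta(Z)\subset_{TRO}p\chi(X)q$ together with the complete isometry $\zeta$ gives precisely $Z\subset_\Delta p\chi(X)q$ by Definition \ref{7}, which proves part (i).

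\textbf{Step 3 (deducing (ii)).} For the weaker conclusion $Z\subset_{cb\Delta}X$, I observe that (i) already furnishes $\zeta(Z)\subset_{TRO}p\chi(X)q$, and I follow the $(ii)\Rightarrow(i)$ construction in the proof of Theorem \ref{20}: define the block map $\hat\chi(x)=\mathrm{diag}\bigl(p\chi(x)q,\,p^\bot\chi(x)q,\,\chi(x)q^\bot\bigr)$, which is a $w^*$-c.b. isomorphism of $X$ onto its range by the open-mapping argument given there. By Example \ref{examp}(ii), $p\chi(X)q\subset_{TRO}\hat\chi(X)$, and Remark \ref{350000} (the isolated fact that $\subset_{TRO}$ composes with $\sim_{TRO}$ on the right) lets me combine $\zeta(Z)\subset_{TRO}p\chi(X)q\subset_{TRO}\hat\chi(X)$ into $\zeta(Z)\subset_{TRO}\hat\chi(X)$ after another application of Proposition \ref{14b} or Lemma \ref{19}. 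Since $\zeta$ is a complete isometry and $\hat\chi$ is a $w^*$-c.b. isomorphism, Definition \ref{17} gives $Z\subset_{cb\Delta}X$. I expect Step 1 to be the main obstacle, because it requires invoking Lemma \ref{270000} with the multiplier algebras $A_l(Y),A_r(Y)$ correctly identified across the two representations and verifying the invariance hypothesis $A_l(Y)\phi_1(Y)A_r(Y)\subseteq\phi_1(Y)$; once the representations are aligned, the rest is a routine chaining through the $\mathrm{TRO}$-level propositions.
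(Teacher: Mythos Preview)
Your overall plan is close to the paper's, but there is a real gap where Steps 1 and 2 meet. Lemma \ref{270000} takes a concrete relation $Z_0\subset_{TRO}Y_0$ together with a representation $\psi$ of $Y_0$ satisfying the multiplier-invariance condition $A_l(Y_0)\psi(Y_0)A_r(Y_0)\subseteq\psi(Y_0)$, and its output is $\zeta(Z_0)\subset_{TRO}\psi(Y_0)$. Starting from $\psi_1(Z)\subset_{TRO}\psi_2(Y)$, the lemma therefore lands you in the \emph{multiplier} copy $\psi(Y)$, not in $\phi_1(Y)$. You try to force $\phi_1$ to be that multiplier representation, but the relation $\phi_1(Y)\subset_{TRO}\phi_2(X)$ you invoke in Step 2 is only known for the \emph{original} $\phi_1$ furnished by the hypothesis $Y\subset_\Delta X$; that hypothesis is existential in the pair $(\phi_1,\phi_2)$, so you cannot choose $\phi_1$ at will and still expect a matching $\phi_2$ to exist. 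After the replacement, Proposition \ref{14b} no longer applies because its two hypotheses refer to different concrete copies of $Y$.

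The paper resolves this by not trying to make the two roles of the $Y$-representation coincide. It fixes the multiplier representation $\psi$ of $Y$ once and applies Lemma \ref{270000} to obtain $\zeta(Z)\subset_{TRO}\psi(Y)$. On the $X$ side it unpacks $Y\subset_\Delta X$, via Remark \ref{soter}, as $Y\sim_\Delta p\chi(X)q$ for a complete isometry $\chi$ and projections $p,q$, and then applies Lemma \ref{xx} \emph{with the corner $p\chi(X)q$ playing the role of $Z$} to manufacture a further complete isometry $\phi$ on $p\chi(X)q$ satisfying $\psi(Y)\sim_{TRO}\phi(p\chi(X)q)$. Now Remark \ref{350000} (rather than Proposition \ref{14b}) chains $\zeta(Z)\subset_{TRO}\psi(Y)\sim_{TRO}\phi(p\chi(X)q)$ into $\zeta(Z)\subset_{TRO}\phi(p\chi(X)q)$, which gives (i). For (ii) the presence of the extra map $\phi$ and of the projections $r,s$ coming from $\zeta(Z)\subset_{TRO}\phi(p\chi(X)q)$ forces a five-block diagonal $\hat\phi$ rather than your three-block $\hat\chi$, and the paper verifies $\zeta(Z)\subset_{TRO}\hat\phi(X)$ by explicit TRO padding $N_i=(M_i\;0\;0\;0\;0)$; neither Proposition \ref{14b} nor Remark \ref{350000} alone delivers that final embedding into the whole of $\hat\phi(X)$.
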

 \begin{proof} Suppose 
that $H_1, H_2$ are Hilbert spaces such that $$A_l(Y)\subseteq B(H_2), \;\;\;A_r(Y)\subseteq B(H_1)$$ and $\psi: Y\rightarrow 
B(H_1, H_2)$ is a $w^*$-continuous complete isometry such that $$A_l(Y)\psi (Y)A_r(Y)\subseteq \psi (Y).$$ By Lemma \ref{270000}, 
there exists a $w^*$-continuous complete isometry $\zeta : Z\rightarrow \zeta (Z)$ such that $$\zeta (Z)\subset _{TRO}\psi (Y).$$ 
Because $Y\subset _\Delta X,$ there exist a $w^*$-continuous complete isometry $\chi : X\rightarrow \chi (X)$ and projections $p,q$ 
such that $p\chi (X)\subseteq \chi (X),\;\;\chi (X)q\subseteq \chi (X)$ and $$Y\sim _\Delta p\chi (X)q.$$ By Lemma \ref{xx}, there 
exists a $w^*$-continuous complete isometry $$\phi: p\chi (X)q\rightarrow \phi (p\chi (X) q)$$ such that $$\psi (Y) \sim _{TRO} \phi 
(p\chi (X)q) .$$ Thus, by Remark \ref{350000} it holds that $$\zeta (Z)\subset _{TRO}\phi (p\chi (X)q) \Rightarrow 
Z\subset _\Delta p\chi (X)q .$$ There exist projections $r,s$ and TROs $M_1, M_2$ such that $$ r\phi (p\chi (X)q)\subseteq 
\phi (p\chi (X)q), \;\;\phi (p\chi (X)q)s\subseteq \phi (p\chi (X)q) $$ and $$\zeta (Z)=[M_2 r\phi (p\chi(X)q) s M_1^*]^{-w^*},
\;\;r\phi (p\chi(X)q) s =[M_2^*\zeta (Z)M_1]^{-w^*}.$$ For every $x\in X$, define $$\hat \phi (x)=r\phi (p\chi(x)q)s\oplus r^\bot
 \phi (p\chi(x)q)s\oplus \phi (p\chi (x)q)s^\bot \oplus p^\bot \chi (x)q\oplus \chi (x)q^\bot . $$ As in the proof of Lemma \ref{19}, 
we can see that $\hat \phi $ is a $w^*$-c.b. isomorphism from $X$ onto $\hat \phi (X)$. Define the TROs $$N_i=(M_i\; 0\;0\;0\;0),
\;\; i=1,2.$$ We can see that $$[N_2\hat \phi (X)N_1^*]^{-w^*}=\zeta (Z),\;\;N_2^*\zeta (Z)N_1\subseteq \hat \phi (X)$$ and 
$$ N_2^*N_2\hat \phi (X)\subseteq \hat \phi (X) ,\;\;\hat \phi (X)N_1^*N_1\subseteq \hat \phi (X). $$ Thus, $$\zeta (Z)\subset
 _{TRO}\hat \phi (X)\Rightarrow Z\subset _{cb\Delta }X.$$ \end{proof}

 \begin{example}\label{c} Let $Y$ be a dual operator space,
 and let $H$ be a Hilbert space such that $Y\subset _\Delta B(H).$ Then, $Y\sim _\Delta B(H).$ \end{example} 
\begin{proof} There
 exist $w^*$-continuous completely isometric maps $\psi : Y\rightarrow \psi (Y)$ and projections $p, q$ such that $p\phi (B(H))
\subseteq \phi (B(H)), \phi (B(H))q\subseteq \phi (B(H))$ and $$\psi (Y)\sim _{TRO}p\phi (B(H))q.$$ We define the map $\alpha :
 B(H)\rightarrow B(H)$ given by $\alpha (x)=\phi ^{-1}(p\phi (x)).$ This is a multiplier of $B(H)$, and also a projection. Because 
$A_l(B(H))=B(H)$, there exists a projection $\hat p\in B(H)$ such that $$\phi^{-1}(p \phi(x))=\hat px\Rightarrow \phi 
(\hat px)=p\phi (x)\;\;\;\forall \;x\;\in \;B(H). $$ Similarly, there exists a projection $\hat q\in B(H)$ such that $$ 
\phi (x\hat q)=\phi (x)q\;\;\;\forall \;x\;\in \;B(H). $$ We have that $$\phi ^{-1}(p\phi (B(H))q)= \hat p B(H)\hat q .$$ Because 
$$A_l( \hat p B(H)\hat q )=B(\hat p(H)),\;\;\;A_r(\hat p B(H)\hat q )=B(\hat q(H)), $$ it follows from Lemma \ref{xx} that there 
exists a $w^*$-continuous completely isometric map $\zeta : \psi (Y)\rightarrow \zeta( \psi(Y)) $ such that $$\zeta (\psi (Y))\sim
 _{TRO}\hat p B(H)\hat q .$$ Thus, there exist TROs $M_1, M_2$ such that $$\zeta (\psi (Y))=[M_2\hat p B(H)\hat q M_1^* ]^{-w^*}.$$ 
Define $$N_2^*=[M_2\hat pB(H)]^{w^*}, \;\;\;N_1=[B(H)\hat q M_1^*]^{-w^*}.$$ Then, we have that
 $$[N_2N_2^*N_2]^{w^*}=[B(H)\hat p M_2^*M_2\hat p B(H)\hat p M_2^*]^{w^*}=[B(H)pM_2^*]^{-w^*}=N_2.$$ Thus,
 $N_2$ is a TRO. Similarly, $N_1$ is a TRO. Now, $$ \zeta( \psi (Y)) =[N_2^*B(H)N_1]^{-w^*}$$ and 
$$[N_2B(H)N_1^*]^{-w^*}=[B(H)\hat p M_2^*B(H)M_1\hat q B(H)]^{-w^*}=B(H).$$ Thus, $$\zeta( \psi (Y)) 
\sim _{TRO}B(H)\Rightarrow Y\sim _\Delta B(H). $$ \end{proof}

 \begin{example}\label{stav} \em{Let $H$ be a
 Hilbert space, $\Phi : B(H)\rightarrow B(H)$ be a $w^*$-continuous completely bounded idempotent map, and
 $$Y=Ran \Phi , \;\;Z=Ran \Phi ^\bot .$$ Let $\phi : B(H)\rightarrow Y\oplus Z$ be the map given by $\phi (x)=\Phi (x)
\oplus \Phi^\bot (x) .$ We can easily prove that $\phi $ is a $w^*$-c.b. isomorphism onto $Y\oplus Z.$ By Example \ref{examp} (ii), 
we have that $Y\subset _{TRO}\phi (B(H))$, and thus $Y\subset_{cb\Delta }B(H).$} \end{example} 

\begin{example}\label{gias} 
\em{Let $H$ be a Hilbert space, and let $e,f\in B(H)$ be nontrivial projections. Define $$\Phi : B(H)\rightarrow B(H), \;\;
\Phi (x)=exf+exf^\bot +e^\bot x f^\bot, $$ and denote $Y=Ran \Phi .$ By Example \ref{stav}, we have that $Y\subset_{cb\Delta }
B(H).$ If $Y$ weakly $\Delta $-embeds into $B(H)$, then by Example \ref{c} we should have that $Y\sim _\Delta B(H).$ However, 
this contradicts the fact that $B(H)$ is a self-adjoint algebra and $Y$ is a non-self-adjoint algebra. Thus, the relation 
$Y\subset _{cb\Delta }X$ does not always imply that $Y\subset _{\Delta }X$ holds.} \end{example} 

\bigskip {\em Acknowledgement:} 
I would like to express appreciation to Dr Evgenios Kakariadis for his helpful comments and suggestions during the preparation 
of this work.

\end{document}